\documentclass[oneside]{amsart}

\usepackage{amsmath}
\usepackage{amsthm}
\interdisplaylinepenalty=2500
\usepackage{mathtools}
\usepackage{amssymb}
\usepackage{latexsym}
\usepackage{graphics}
\usepackage{graphicx}
\usepackage{epstopdf}
\usepackage{tikz}
\usepackage{color}
\usepackage{import}
\usepackage{epstopdf}
\usepackage{float}
\usepackage[normalem]{ulem}

\usepackage{caption}
\usepackage[labelformat=simple]{subcaption}

\usepackage[hidelinks,pdftex]{hyperref}

%%  If the following line is uncommented, we see the labels of theorems,
%% figures, etc. in the margins.
%\usepackage[notref,notcite]{showkeys}

%%This command stops the Math Review numbers appearing in the references
\AtBeginDocument{%
   \def\MR#1{}
}

%% Margin notes for editing paper
\usepackage{marginnote}
% Tiny marginpars
\long\def\@savemarbox#1#2{\global\setbox#1\vtop{\hsize\marginparwidth
  \@parboxrestore\tiny\raggedright #2}}
\marginparwidth .75in \marginparsep 7pt

%% Coloured notes for editing paper
%% \newcommand{\n}[1]{{\color{magenta}{#1}}}
%% \newcommand{\jes}[1]{{\color{blue}{#1}}}

%% Theorems, etc. Number any equations in order with theorems
\numberwithin{equation}{section}
\theoremstyle{plain}
\newtheorem{theorem}[equation]{Theorem}
\newtheorem{corollary}[equation]{Corollary}
\newtheorem{lemma}[equation]{Lemma}

\newtheorem{question}[equation]{Question}

\newtheorem*{namedtheorem}{\theoremname}
\newcommand{\theoremname}{testing}
\newenvironment{named}[1]{\renewcommand{\theoremname}{#1}\begin{namedtheorem}}{\end{namedtheorem}}

\theoremstyle{definition}
\newtheorem{definition}[equation]{Definition}
\newtheorem{remark}[equation]{Remark}

\newtheorem{assumption}[equation]{Assumption}

\usepackage{geometry}
\geometry{margin=1.2in}

%% Macros

\newcommand{\HH}{{\mathbb{H}}}

\newcommand{\QQ}{{\mathbb{Q}}}

\newcommand{\from}{\colon} % As in ``f maps _from_ X _to_ Y''.

\newcommand{\area}{\operatorname{area}}
\newcommand{\bdy}{\partial}
\renewcommand{\setminus}{{\smallsetminus}}
\newcommand{\len}{\operatorname{len}}

%% Shortcuts to refer to things (requires careful naming)
\newcommand{\refthm}[1]{Theorem~\ref{Thm:#1}}
\newcommand{\reflem}[1]{Lemma~\ref{Lem:#1}}

\newcommand{\refcor}[1]{Corollary~\ref{Cor:#1}}

\newcommand{\refeqn}[1]{\eqref{Eqn:#1}}

\newcommand{\refsec}[1]{Section~\ref{Sec:#1}}
\newcommand{\reffig}[1]{Figure~\ref{Fig:#1}}

%%%%%%%%%%%%%%%%%%%%%%%%%%%%%%%%%%%%%%%%%%%%%%%%%%%%%%%%%%%%%%%%%
\title[Polynomial bounds for surfaces]{Polynomial bounds for surfaces in cusped 3-manifolds}
\author{Jessica S. Purcell}
\author{Anastasiia Tsvietkova}
%\date{} %% Print date at bottom to help with version control. Remove this before posting on arxiv
\subjclass[2020]{}
\everymath{\displaystyle}

\begin{document}

\begin{abstract}
It is natural to ask how many isotopy classes of embedded essential surfaces lie in a given 3-manifold. The first bounds on the number of such surfaces were exponential, using normal surfaces. More recently, by restricting to alternating link complements in 3-sphere, Hass, Thompson and Tsvietkova obtained polynomial bounds, but for a limited class of surfaces: closed and spanning ones. Here, we complete the picture for classical alternating links and extend these results to other classes of cusped 3-manifolds. We give explicit polynomial bounds on all embedded essential surfaces, closed or any boundary slope, orientable or non-orientable. Our 3-manifolds are complements of links with alternating diagrams on wide classes of surfaces in broad families of 3-manifolds. This includes all alternating links in 3-sphere as well as many non-alternating ones, alternating virtual knots, many toroidally alternating knots, and most Dehn fillings of such manifolds. 
\end{abstract}

\maketitle

%%%%%%%%%%%%%%%%%%%%%%%%%%%%%%%%%%%%%%%%%%%%%%%%%%%%%%%%%%%%%%%%%
\section{Introduction}\label{Sec:Intro}

For decades, the study of essential surfaces in 3-manifolds has led to important topological and geometric consequences. For example, the existence or nonexistence of low genus essential surfaces gives crucial insight into the geometrization of the 3-manifold, due to work of Thurston~\cite{Thurston:Bulletin}, and every 3-manifold is known to have a finite cover containing an embedded essential surface, by work of Kahn and Markovic~\cite{KahnMarkovic:SfceSgp}, Haglund and Wise~\cite{HaglundWise}, and Agol~\cite{Agol:VirtualHaken}. It is natural to ask how many connected embedded essential surfaces lie in a given 3-manifold for fixed Euler characteristic (up to ambient isotopy). For certain 3-manifolds, the number is infinite; for example there are infinitely many nonisotopic Seifert surfaces in a connect sum of knots, as shown in work of Eisner~\cite{Eisner}. However, for 3-manifolds with no essential embedded spheres or tori, the number of embedded surfaces with fixed topological type is finite; see for example Jaco and Oertel~\cite{JacoOertel}. 

Using triangulations of 3-manifolds and normal surface theory, one can bound the number of isotopy classes of fundamental normal surfaces; see Matveev~\cite{Matveev}, Jaco and Oertel~\cite{JacoOertel}, and Hass, Lagarias, and Pippenger~\cite{HLP}. This can be used to give bounds on the number of all essential surfaces in certain settings, but the bounds are a tower of exponentials in terms of genus. Hass, Thompson, and Tsvietkova recently significantly improved upon these results by restricting to alternating link complements. In this setting, they showed that the number of isotopy classes of closed surfaces is polynomial in the number of crossings of an alternating diagram~\cite{HTT1}. They then generalized to give similar results for spanning surfaces, i.e.\ those surfaces with a single boundary component forming a longitude for the knot~\cite{HTT2}. However, even in the alternating setting, the most general case remained open, namely counting surfaces of any topological type. 

In this paper, we complete the program for alternating links. We find polynomial bounds on the number of isotopy classes of embeddings of any surface, orientable or nonorientable, with or without boundary, and boundary forming any slope on a neighbourhood of the link.
We do this by broadening our perspective. Rather than restricting to tools available in the classical alternating setting, we develop and extend more general tools to cusped 3-manifolds that can be described by similar combinatorial patterns as alternating links. The combinatorics required is an alternating diagram on an arbitrary closed surface lying in an arbitrary 3-manifold, with mild hypotheses, namely weakly generalized alternating links defined below. 
As a consequence, our results immediately extend to give polynomial bounds for broader families of 3-manifolds that have received much recent attention: virtual alternating links, many toroidally alternating links, and most Dehn fillings of these manifolds and classical alternating links.

Our bounds are universal, in the sense that given a fixed topological surface, we obtain the same formula bounding isotopy classes of its embeddings in any of the above 3-manifolds. They are also explicit and effective. The formula is polynomial in the crossing number of a diagram, with the degree of the polynomial and the constants of multiplication depending only on the Euler characteristic of the surface, and most generally, on topology of the ambient 3-manifold.

\subsection{Main results}

Our first main theorem completes the program started by Hass, Thompson, and Tsvietkova~\cite{HTT1,HTT2}. It gives polynomial bounds for surfaces in alternating link complements, with no restrictions on surface.

\begin{named}{\refthm{MainCountSphereMeridCompress}}
Let $\pi(L)$ be a prime alternating projection of a link $L$ onto $S^2$ in $Y=S^3$ with $n$ crossings. Let $Z$ be a connected topological surface with Euler characteristic $\chi(Z)$, with all boundary components (if any) on $N(L)$.
The number of ways, up to isotopy, that $Z$ can be properly embedded in $S^3-N(L)$ as an essential surface is at most
\[ (6n)^{80\chi(Z)^2} 2^{-4\chi(Z)+2} \]
\end{named}

We also restrict to meridianally incompressible surfaces, obtaining a stronger bound of $(6n)^{80\chi(Z)^2}$ in \refthm{MainCountSphere}. Such surfaces include all quasifuchsian surfaces, giving a bound that applies in a more geometrical setting; see~\refcor{QuasifuchsianSphere}. Quasifuchsian surfaces are known to be very prevalent in cusped hyperbolic 3-manifolds by work of Cooper and Futer~\cite{CooperFuter}, and in closed hyperbolic 3-manifolds by work of Kahn and Markovic~\cite{KahnMarkovic:SfceSgp}. 

The methods to prove \refthm{MainCountSphereMeridCompress} are much more general than the classical alternating setting. They can be used immediately to give bounds in other important settings, such as that of virtual alternating knots.

\begin{named}{\refcor{CountVirtualKnots}}
Let $\pi(L)$ be a virtual link $L$, with a weakly prime alternating projection onto a surface $\Pi=F\times\{0\} \subset Y=F\times[-1,1]$ with $n$ crossings. Fix a connected, orientable topological surface $Z$ with Euler characteristic $\chi$. Then up to isotopy, the number of ways to properly embed $(Z, \bdy Z)$ as an essential, meridianally incompressible surface in $(Y-N(L), \bdy N(L))$  is at most:
\[ (2(g+1))^{-4\chi} \cdot (6n)^{-800\chi^3+80\chi^2} 
\]
If in addition $Y-L$ admits a hyperbolic structure, then the above bounds quasifuchsian surfaces.

If we do not require meridianal incompressibility, the number of ways is at most:
\[ (4(g+1))^{-4\chi(Z)+2}(6n)^{-800\chi^3+80\chi^2} \]
\end{named}

Interest in the complements of knots in spaces besides $S^3$, with diagrams on surfaces besides $S^2$, has been growing in recent years. Some recent results generalize classical knot theory, for example the work of Boden and Karimi~\cite{Boden-Karimi}. Other research concerns quantum invariants, such as~\cite{Boninger, BDKY}, cobordisms, e.g.~\cite{Turaev, CarterKamadaSaito:Virtual}, geometric topology, as in~\cite{Hayashi, Kaplan-Kelly, Ichihara:NoEssSfces}, as well as other aspects of these 3-manifolds~\cite{BodenDancsoLinWinkinson, CarterSilverWilliams}. One widely studied class of such 3-manifolds is virtual links. They were described by Kauffman in the 1990s as natural generalisations of Gauss codes for classical knot diagrams~\cite{Kauffman:VirtualKnots}. They were subsequently shown to be equivalent to diagrams of links in thickened surfaces up to moves called stabilisation and destabilisation~\cite{KamadaKamada:Virtual, CarterKamadaSaito:Virtual}, and Kuperberg showed that there is a unique minimal genus surface $\Pi$ for which the link embeds in $\Pi\times[-1,1]$ with diagram on $\Pi\times\{0\}$, admitting no destabilisations~\cite{Kuperberg:Virtual}. In the alternating case, provided the diagram is weakly prime (definition recalled below), such a link satisfies all conditions to ensure it is an instance of a \emph{weakly generalized alternating link}, defined by Howie and Purcell~\cite{HowiePurcell}.

Indeed, the tools we develop here apply most broadly in the setting of weakly generalized alternating links. Informally, such links are complements of links in general 3-manifolds that have an alternating diagram on some embedded surface, with mild restrictions involving requirements on connectivity and interaction with compression discs for the projection surface.
Our most general result is:

\begin{named}{\refthm{NotMeridIncompr}}
Let $L$ be a weakly generalized alternating link in a 3-manifold $Y$, with alternating diagram $\pi(L)$ on a projection surface $\Pi$, satisfying Assumptions \ref{ManifoldProj}, \ref{CellularRep}.  Suppose that $\pi(L)$ has $n$ crossings, and suppose that for each 3-manifold component $\Sigma$ of $Y-N(\Pi)$, there is a universal bound $X$ on the number of isotopy classes of incompressible surfaces properly embedded in  $\Sigma$ with fixed genus and fixed boundary curves on $\bdy N(\Pi)\cap \bdy\Sigma$. Fix a topological surface $Z$, either orientable or non-orientable, possibly with boundary, with fixed orientable or non-orientable genus $g$ and Euler characteristic $\chi$. Then up to isotopy, the number of ways such a surface can be properly embedded in $Y-N(L)$ as an essential surface is at most:
\[  (2X(g+1))^{-4\chi+2}\cdot (6n)^{-800\chi^3+80\chi^2} \]
\end{named}

The factor $X$ is explicitly known for a number of families of cusped 3-manifolds. For example, it can be used to give bounds for weakly generalized alternating links with a projection surface $\Pi$ that is a Heegaard torus in a lens space, thickened torus, or solid torus; the bound is \refcor{TorusChunks}. These classes of 3-manifolds appear frequently in low-dimensional topology. For example, they include instances of toroidally alternating links introduced by Adams~\cite{Adams:ToroidAlt}.

Again, a direct corollary is a bound on the number of quasifuchsian surfaces in the hyperbolic case; see \refcor{QuasifuchsianGeneral}. 

Finally, because our bounds apply to all topological surfaces, with any boundary slope on the knot or link complement, we are also able to obtain bounds on the number of essential surfaces in 3-manifolds obtained by Dehn filling such link complements. See \refthm{DehnFilling} and \refcor{DehnFilling}.

%%%%%%%%%%%%%%%%%%%%%%%%%%%%%%%%%%%%%%%%%%%%%%%%%%%%%%%%%%%%%%%%%
\subsection{Comparison to other results}
Analogues of the question of how many connected essential surfaces embed in a given 3-manifold have been addressed by others. Masters~\cite{Masters} and then Kahn and Markovic~\cite{KahnMarkovic} found exponential bounds on the number of immersed essential surfaces in 3-manifolds that are hyperbolic and closed. More recently, Dunfield, Garoufalidis, and Rubinstein showed that quasi-polynomial upper bounds exist for connected, orientable surfaces for a certain wide class of cusped 3-manifolds that includes hyperbolic knot complements in the 3-sphere~\cite{DunfieldGaroufalidisRubinstein}. More precisely, they give an algorithm producing a quasi-polynomial count for disconnected surfaces in a fixed 3-manifold. However, all these bounds depend on the 3-manifold. When the 3-manifold changes, the expression of the bound changes in a way that is not well-understood, while the order of dependence on the genus or Euler characteristic stays the same. In contrast, our bounds are explicit and universal, depending on crossing number, and, most generally, the bound on surfaces $X$ from the ambient 3-manifold. We achieve this through developing techniques that are completely different from those of ~\cite{KahnMarkovic} and ~\cite{DunfieldGaroufalidisRubinstein}.

The methods we develop here are also new: they have not been used in any of the previous bounds. Some of our techniques can be seen broadly as generalizations of normal surface theory, though we do not triangulate the 3-manifold, looking instead at a more general decomposition. Moreover, the direct application of classical normal surface theory to this problem does not yield polynomial bounds, as explained above.

One dimension lower, a similar question was addressed by Mirzakhani for curves on surfaces~\cite{Mirzakhani}, with numerous extensions, for example~\cite{Sapir:NonsimpleGeodesics, ErlandssonSouto:MultiGeodesics}. 

A related, but simpler problem is bounding the number of incompressible surfaces that can be simultaneously disjointly embedded in a 3-manifold. It was originally studied by Kneser, who obtained bounds in 1929~\cite{Kneser}. Here, we bound all essential embeddings, not just disjoint collections.

\subsection{How good are these bounds?}

The upper bounds of this paper are likely very far from sharp: our goal is to establish polynomial bounds rather than obtain the most sharp bounds, which would require very delicate analysis at certain stages of the proof. Rather than pursue this analysis, we take frequent shortcuts that preserve the polynomial order of the bound, but at the cost of growing constants and exponents. However, we note that this is still state of the art: there are no known sharp bounds for connected surfaces in any setting, and we have not come across even open conjectures on sharpness in the literature. What we do provide are explicit, effective, computable, and polynomial bounds in the classical, virtual, and toroidally alternating settings. This is novel. Many of the other important results above only prove the existence of certain constants without deriving formulae, where our bounds are immediately computable.

\subsection{Organisation} In \refsec{WGA}, we recall the definition of a weakly generalized alternating link, and the decomposition of its complement into \emph{chunks} introduced by Howie and Purcell~\cite{HowiePurcell}. We also review certain techniques for essential surfaces in these link complements: normal form with respect to a chunk decomposition, combinatorial area, and results from Purcell and Tsvietkova~\cite{Paper1} giving restrictions on how such surfaces meet chunks. 
We begin the count of surfaces in \refsec{Counting}, bounding the number of subsurfaces making up an essential surface, and the number of labels on the boundaries of subsurfaces. In \refsec{Intersections}, we restrict the possible number of boundary curves of subsurfaces.

At this point, we are ready for our first application: the count of essential surfaces for classical alternating links in $S^3$ whose boundary follows a longitude more than one time. We do this in  \refsec{Classical}.

We then return to the more general case of links in any compact orientable 3-manifold, giving a general bound depending on the chunks in \refsec{SfcCount}, and giving applications to certain families of weakly generalized alternating links in \refsec{Tori}: bounds for weakly generalized alternating links in thickened tori and in lens spaces, and bounds for virtual alternating knots that have a diagram that is cellular, meaning all regions are disks. 
Finally, we give applications to Dehn fillings in \refsec{DehnFillings}.

\subsection{Acknowledgments}
Purcell was partially supported by the Australian Research Council, grant DP210103136. Tsvietkova was partially supported by the National Science Foundation (NSF) of the United States, grants DMS-1664425 (previously 1406588), DMS-2005496, and DMS-2142487 (CAREER), as well as the Institute of Advanced Study under NSF grant DMS-1926686 and by the Sydney Mathematical Research Institute. Both authors were partially supported by the Okinawa Institute of Science and Technology, where this research was started.

%%%%%%%%%%%%%%%%%%%%%%%%%%%%%%%%%%%%%%%%%%%%%%%%%%%%%%%%%%%%%%%%%
\section{Preliminaries}\label{Sec:WGA}

In recent work of Hass, Thompson, and Tsvietkova~\cite{HTT1, HTT2}, surfaces are put into a \emph{standard form}, extending work of Menasco~\cite{Menasco1984,Menasco1985} and Menasco--Thistlethwaite~\cite{MenascoThistlethwaite}. 
Here, we combine standard form techniques with related work of Lackenby~\cite{lac00}, Futer--Gu\'eritaud~\cite{fg09}, and Howie--Purcell~\cite{HowiePurcell}, who put surfaces into \emph{normal form} with respect to a decomposition of the knot complement. The combination of these two ideas extends beyond usual alternating links projected onto $S^2$ lying in the 3-sphere. Howie and Purcell extend such tools to apply to a class of knots with alternating diagram projected onto any closed orientable surface $\Pi$ embedded in any irreducible, boundary irreducible, compact orientable 3-manifold $Y$~\cite{HowiePurcell}. These are called \emph{weakly generalized alternating links}. They include alternating links in 3-sphere, which we refer to as \emph{classical alternating links}. They also include many virtual alternating knots, many alternating knots on Heegaard surfaces in closed irreducible 3-manifolds, and other broad families of 3-manifolds.

In this section, we recall the definition of weakly generalized alternating links, the decomposition of their complement into chunks, and normal surfaces within them. Most of this originally appeared in Howie--Purcell~\cite{HowiePurcell} and in Purcell--Tsvietkova~\cite{Paper1}.

\subsection{Weakly generalized alternating links}

\begin{assumption}\label{ManifoldProj}
Throughout, we work with PL manifolds. Let $Y$ be a compact, orientable, irreducible 3-manifold, possibly with boundary. Embedded in $Y$ is a closed, orientable surface $\Pi$. If $Y$ has boundary, we require $\bdy Y$ to be incompressible in $Y-N(\Pi)$, where $N(\cdot)$ denotes an open regular neighborhood. We also require $Y-\Pi$ to be irreducible.
\end{assumption}

A \emph{generalized diagram} is the projection $\pi\from L \to N(\Pi)$ of a link $L$ onto the surface $\Pi$ in general position. That is, $L$ can be isotoped through $Y$ to lie in $N(\Pi)$. The image of the projection $\pi(L)$ consists of crossings and arcs between them on $\Pi$.
Observe that the requirement that $Y-\Pi$ be irreducible means that if $\Pi$ is the 2-sphere, then $Y=S^3$ and the generalized diagram is actually the standard diagram of a knot in the 3-sphere.

A generalized diagram is \emph{alternating} if, for each region of $\Pi\setminus \pi(L)$, each boundary component of the region is alternating. That is, it can be oriented such that crossings run from under to over in the direction of orientation. An alternating generalized diagram is \emph{checkerboard colored} if each region of $\Pi\setminus \pi(L)$ is oriented so that the induced orientation on the boundary is alternating. See~\cite[Figure~1]{Paper1} for examples and nonexamples.

To ensure that the diagram $\pi(L)$ is sufficiently reduced, we introduce a notion of prime. A generalized diagram is \emph{weakly prime} if whenever a disk $D$ embedded in $\Pi$ has boundary $\bdy D$ meeting $\pi(L)$ transversely exactly twice, either the disk contains no crossings in its interior, or $\Pi$ is the 2-sphere and there is a single embedded arc with no crossings in the complementary disk $\Pi-D$. Note that by this definition, a classical alternating link on $S^2$ in $S^3$ that is prime is also weakly prime.

Finally, the representativity of a generalized diagram is defined as follows. Because $\Pi$ is orientable, $Y-N(\Pi)$ has two boundary components for each component $\Pi_i$ of $\Pi$, call them $\Pi_i^+$ and $\Pi_i^-$. Let $r^+(\pi(L), \Pi_i)$ denote the minimum number of intersections between $\pi(L)$ and the boundary of any essential compresssing disk in $Y-N(\Pi)$ whose boundary lies on $\Pi_i^+$; if there are no such essential compressing disks, we definite this to be $\infty$. Define $r^-(\pi(L),\Pi_i)$ similarly. The \emph{representativity} $r(\pi(L), \Pi)$ is the minimum of all values $r^+(\pi(L),\Pi_i)$ and $r^-(\pi(L),\Pi)$ over all $i$. Thus it measures the minimum number of times the boundary of any essential compressing disk for  $Y-N(\Pi)$ meets $\pi(L)$.

The \emph{hat-representativity} $\hat{r}(\pi(L),\Pi)$ is defined to be the minimum of
\[ \bigcup_i \max\{r^+(\pi(L),\Pi_i), r^-(\pi(L),\Pi_i)\}. \]
Thus it measures the minimal number of intersections of the boundary of any essential compressing disk on one side of $\Pi$. See~\cite[Example~2.2]{Paper1} for examples.

Note that in the case $\Pi$ is the 2-sphere inside $Y=S^3$, there are no essential compressing disks for the balls $S^3-N(\Pi)$. Hence the representativity and hat-representativity in the classical alternating setting are both infinite.

A generalized diagram $\pi(L)$ on $\Pi$ is \emph{weakly generalized alternating}~\cite{HowiePurcell} if
\begin{enumerate}
\item $\pi(L)$ is alternating on $\Pi$,
\item $\pi(L)$ is weakly prime,
\item $\pi(L)$ meets each component of the projection surface $\Pi$,
\item each component of $\pi(L)$ projects to at least one crossing in $\pi(L)$,
\item $\pi(L)$ is checkerboard colorable, and
\item the representativity $r(\pi(L),\Pi)\geq 4$.
\end{enumerate}

From now on, every link we consider will be weakly generalized alternating. A classical reduced, prime, alternating diagram of a link $L$ on $\Pi=S^2$ in $S^3$ is an example of a weakly generalized alternating link. As noted in the introduction, there are many more examples, including alternating links on Heegaard tori in $S^3$ and in lens spaces. When the diagram is \emph{cellular}, meaning all regions of $\Pi-\pi(L)$ are disks, these are examples of toroidally alternating knots~\cite{Adams:ToroidAlt}. Virtual alternating knots are further examples of weakly generalized alternating knots; these are alternating knots in a thickened surface $\Sigma\times (-1,1)$, projected onto $\Pi=\Sigma\times\{0\}$. Their geometry has also received attention recently, for example~\cite{Adams:ThickenedSfces, CKP:Biperiodic}. We will revisit them again in \refsec{Virtual}.

\subsection{Chunk decomposition}

Classical alternating knot and link complements have a well-known decomposition into topological polyhedra. This was suggested by W.~Thurston and described by Menasco~\cite{men83}; see also Lackenby~\cite{lac00} and Purcell~\cite[Chapters~1 and~11]{Purcell:HKT}. A more general decomposition of arborescent knots into angled blocks was defined by Futer and Gu{\'e}ritaud~\cite{fg09}, and this was generalized further by Howie and Purcell to weakly generalized alternating links~\cite{HowiePurcell}. We briefly review the salient points here; see also~\cite[Section~3]{Paper1} for further discussion and examples.

A checkerboard colored diagram has two checkerboard colored surfaces. The white surface comes from regions of $\Pi-\pi(L)$ that are colored white, and connected by twisted bands at each crossing; a similar construction holds for the shaded surface. Note the two surfaces intersect at a crossing in a \emph{crossing arc}, which runs from the overstrand to the understrand. The decomposition of the weakly generalized alternating link complement is obtained by cutting along the two checkerboard surfaces. This cuts $Y-N(L)$ into components with interiors homeomorphic to $Y-N(\Pi)$. These are called \emph{chunks}. Crossing arcs become ideal edges on the chunk boundary; strands of the knot become ideal vertices, and we contract strands so that ideal vertices lie at a crossing of the diagram, and edges follow the diagram graph of $\pi(L)$. Thus each chunk is a connected component of $Y-N(\Pi)$ with $\Pi^+$ and $\Pi^-$ decorated by the following:
\begin{enumerate}
\item Edges of $\pi(L)$, corresponding to ideal edges. We call these \emph{interior edges}. Four interior edges, two on each side of $\Pi^{\pm}$, are glued to form a crossing arc in $Y-L$. 
\item \emph{Ideal vertices} at the crossings of $\pi(L)$. These are all 4-valent.
\item Regions of $\pi(L)$ bounded by interior edges. These are called \emph{faces} of the chunk. They are not necessarily simply connected, but they are checkerboard colored.
\end{enumerate}
We further truncate ideal vertices. Because vertices are 4-valent, the resulting truncation turns each ideal vertex into a quad, called a \emph{truncation face}, bounded by \emph{truncation edges} (called boundary faces and boundary edges in \cite{HowiePurcell}). A weakly generalized alternating link complement admits such a decomposition~\cite[Propositions~3.1 and~3.3]{HowiePurcell}.

The key point is that the combinatorics of the chunk decomposition exactly matches the combinatorics of the link diagram; this comes from the fact that the diagram is alternating. Then the fact that the diagram is weakly prime and has bounded representativity restricts the way that surfaces can lie inside the chunk.

\subsection{Normal surfaces}
We now consider $(Z,\bdy Z)$ to be an essential surface embedded in $(Y-N(L),\bdy N(L))$, where the boundary of the surface $Z$ is possibly empty. We generally allow $Z$ to be orientable or nonorientable, unless otherwise stated. Recall a surface is \emph{essential} if it is incompressible, boundary incompressible, and is not boundary parallel.

\begin{assumption}
Throughout, we use the topological definition of incompressible surfaces: a surface $Z$ that is neither a disk nor a 2-sphere is incompressible in a 3-manifold $M$ if any disk $D$ with interior embedded in $M-N(Z)$, with boundary on $Z$, satisfies $\bdy D$ bounds a disk in $Z$. A surface that is not incompressible admits an essential compression disk, i.e.\ a disk $D$ with interior embedded in $M-N(Z)$ with $\bdy D$ an essential curve on $Z$. 

A 2-sphere is incompressible if and only if it does not bound a 3-ball. By convention, disks will be neither incompressible nor compressible.
\end{assumption}

We will count essential surfaces later in the paper in weakly generalized alternating links and their Dehn fillings. To do so, we put them into \emph{normal form} with respect to the chunk decompostion.
Normal form in this setting was introduced in Howie--Purcell~\cite[Definition~3.7]{HowiePurcell}; it generalizes the definition of \emph{standard position} for surfaces in classical alternating knots used by Menasco~\cite{Menasco1984} and Menasco--Thistlethwaite~\cite{MenascoThistlethwaite}, as well as \emph{normal form} for surfaces in Lackenby~\cite{lac00} and Futer--Gu{\'e}ritaud~\cite{fg09}. We will not need the full definition of normal form here, only the consequences of that definition from Howie--Purcell~\cite{HowiePurcell} and from Purcell--Tsvietkova~\cite{Paper1}. Therefore we refer to these references for definitions and examples, but we review the necessary results here.

In ~\cite[Theorem~3.8]{HowiePurcell}, it is shown that any essential surface in a 3-manifold with a chunk decomposition can be put into normal form with respect to the chunk decomposition.
When $Z$ is isotoped into normal form, it is cut into components $Z_i$, which are connected subsurfaces of $Z$ in normal form within a chunk. These may be closed or with boundary, and might have multiple boundary components. We will write $Z=\bigcup_i Z_i$.

Both for classical alternating links in $S^3$~\cite[Theorem~2]{Menasco1984} and for weakly generalized alternating links under certain conditions~\cite[Lemma~4.9]{HowiePurcell}, a closed surface $Z'$ admits meridianal compressions. After meridianal compressions the resulting surface $Z$ can be arranged to meet the diagram in meridians.

In~\cite[Theorem~6.2]{Paper1}, it is shown that an isotopy into normal form can be done such that boundary curves $\bdy Z$ that are meridians remain in \emph{meridianal form} after the isotopy, meaning that $\bdy Z$ cuts off exactly two corners of quads corresponding to truncation faces, one on $\Pi^+$ and one on $\Pi^-$. See \reffig{Meridian}, which is from~\cite{Paper1}. Moreover, such an isotopy does not increase weight, where the weight is the pair $(s(Z), t(Z))$, ordered lexicographically, where $s(Z)$ counts the number of intersections of $Z$ with interior edges, and $t(Z)$ counts the number of intersections with truncation edges.

\begin{figure}
  %% Creator: Inkscape inkscape 0.92.4, www.inkscape.org
%% PDF/EPS/PS + LaTeX output extension by Johan Engelen, 2010
%% Accompanies image file 'MeridinalForm.pdf' (pdf, eps, ps)
%%
%% To include the image in your LaTeX document, write
%%   \input{<filename>.pdf_tex}
%%  instead of
%%   \includegraphics{<filename>.pdf}
%% To scale the image, write
%%   \def\svgwidth{<desired width>}
%%   \input{<filename>.pdf_tex}
%%  instead of
%%   \includegraphics[width=<desired width>]{<filename>.pdf}
%%
%% Images with a different path to the parent latex file can
%% be accessed with the `import' package (which may need to be
%% installed) using
%%   \usepackage{import}
%% in the preamble, and then including the image with
%%   \import{<path to file>}{<filename>.pdf_tex}
%% Alternatively, one can specify
%%   \graphicspath{{<path to file>/}}
%% 
%% For more information, please see info/svg-inkscape on CTAN:
%%   http://tug.ctan.org/tex-archive/info/svg-inkscape
%%
\begingroup%
  \makeatletter%
  \providecommand\color[2][]{%
    \errmessage{(Inkscape) Color is used for the text in Inkscape, but the package 'color.sty' is not loaded}%
    \renewcommand\color[2][]{}%
  }%
  \providecommand\transparent[1]{%
    \errmessage{(Inkscape) Transparency is used (non-zero) for the text in Inkscape, but the package 'transparent.sty' is not loaded}%
    \renewcommand\transparent[1]{}%
  }%
  \providecommand\rotatebox[2]{#2}%
  \newcommand*\fsize{\dimexpr\f@size pt\relax}%
  \newcommand*\lineheight[1]{\fontsize{\fsize}{#1\fsize}\selectfont}%
  \ifx\svgwidth\undefined%
    \setlength{\unitlength}{360bp}%
    \ifx\svgscale\undefined%
      \relax%
    \else%
      \setlength{\unitlength}{\unitlength * \real{\svgscale}}%
    \fi%
  \else%
    \setlength{\unitlength}{\svgwidth}%
  \fi%
  \global\let\svgwidth\undefined%
  \global\let\svgscale\undefined%
  \makeatother%
  \begin{picture}(1,0.292892)%
    \lineheight{1}%
    \setlength\tabcolsep{0pt}%
    \put(0,0){\includegraphics[width=\unitlength,page=1]{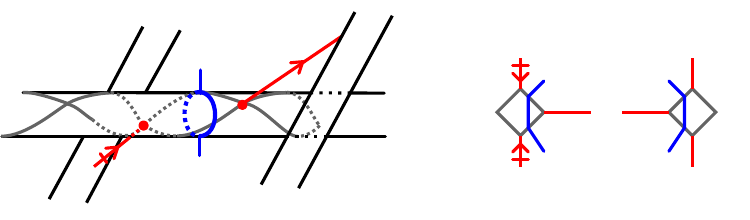}}%
    \put(0.75669826,0.16679205){\color[rgb]{0,0,0}\makebox(0,0)[lt]{\lineheight{1.25}\smash{\begin{tabular}[t]{l}$\Pi^+$\end{tabular}}}}%
    \put(0,0){\includegraphics[width=\unitlength,page=2]{MeridinalForm.pdf}}%
    \put(0.96561298,0.16568718){\color[rgb]{0,0,0}\makebox(0,0)[lt]{\lineheight{1.25}\smash{\begin{tabular}[t]{l}$\Pi^-$\end{tabular}}}}%
  \end{picture}%
\endgroup%

  \caption{Left: a surface with meridianal boundary can be isotoped to meet $N(L)$ transversely away from crossings. Right: in the chunk decomposition, $\bdy Z$ cuts off exactly two corners of truncation faces.}
  \label{Fig:Meridian}
\end{figure}

\begin{assumption}\label{MinimalComplexity}
From now on, when we put an essential surface in normal form, we always make two assumptions:
\begin{enumerate}
\item the surface is in meridianal form, and
\item of all ways to isotope the surface into normal, meridianal form, it has least weight.
\end{enumerate}
\end{assumption}

We obtain a labeling of curves of $Z_i$ that lie on the chunk boundary, described in~\cite[Section~5]{Paper1}, which we now review.
For a normal component $Z_j$ of the surface $Z$, each boundary component of $\bdy Z_j$ runs over truncation edges and interior edges of a chunk. Each intersection of $\bdy Z_j$ with an interior edge is labeled with an $S$ for ``saddle'', corresponding to an analogous labeling by Menasco~\cite{Menasco1984}; see also~\cite{HTT1, HTT2}. If a component of $\bdy Z$ is in meridianal form, there will be some $Z_i$ and $Z_j$ such that $\bdy Z_i$ and $\bdy Z_j$ each cut off exactly one corner of a truncation quad in meridianal form. Label these intersections with faces by $P$. More generally some $\bdy Z_k$ may run through truncation faces that may or may not be in meridianal form. Then $\bdy Z_k$ intersects one truncation edge on the way in, and one truncation edge on the way out; label each intersection with a $B$. The labeling of components of $\bdy Z_i$ by letters $S$, $B$, or $P$ associates a cyclic word to $\bdy Z_i$.

For classical alternating links, each $Z_i$ can be taken to be a disk lying in one of the topological 3-balls either above or below the link. The boundary curves $\bdy Z_i$ are determined by the associated words in $S$, $P$, and $B$ and by the position of each letter on the link diagram. Thus bounding the number of possible boundary words gives a bound on the number of surfaces; this is used in the work of Hass, Thompson, and Tsvietkova~\cite{HTT1, HTT2}. For weakly generalized alternating links, the subsurfaces $Z_i$ might have positive genus and multiple boundary components. Moreover, the subsurfaces are not in topological 3-balls anymore, but in chunks which might have complicated topology themselves. Hence we need more tools to restrict and control possible subsurfaces that arise. One such tool is the combinatorial area.

\subsection{Combinatorial area}\label{Subsec:Area}
Label each interior edge of a chunk with angle $\pi/2$, and label each truncation edge with angle $\pi/2$.

Write $Z=\bigcup_{j=1}^m Z_j$ where each $Z_j$ is a connected normal surface embedded in a chunk.
Each curve $\bdy Z_j$ will meet some number of interior edges, each labeled by $S$; denote the number by $(\#S)$. It will also meet truncation edges, with each either labeled by $B$ or with a pair in meridianal form labeled by a single $P$. Denote the number of instances of $B$ by $(\#B)$ and the number of instances of $P$ by $(\#P)$. On a single component of $N(L)$, all intersections of $\bdy Z_j$ with corresponding truncation edges will either all be labeled $B$ or all labeled $P$.

For this surface, the \emph{combinatorial area} of $Z_j$ is defined to be
\begin{equation}\label{Eqn:ZjArea}
  a(Z_j) = \frac{\pi}{2}(\#S) + \frac{\pi}{2}(\#B) + \pi(\#P) - 2\pi\chi(Z_j).
\end{equation}
The \emph{combinatorial area} of $Z$ then is
\begin{equation}\label{Eqn:ZArea}
  a(Z) = \sum_{i=1}^n a(Z_i).
\end{equation}

This construction satisfies a Gauss--Bonnet formula \cite[Proposition~3.12]{HowiePurcell}:
\begin{equation}\label{Eqn:GB}
a(S) = -2\pi\chi(Z).
\end{equation}

Furthermore, combinatorial area satisfies the following lemma.
\begin{lemma}[Lemma~8.5 of \cite{Paper1}]\label{Lem:Areas}
Let $Z_i$ be normal and connected with respect to a chunk. The combinatorial area of $Z_i$ satisfies the following.
\begin{enumerate}
\item[(1)]%% \label{Itm:ChiLeq-1}
  If $\chi(Z_i)< 0$ then $a(Z_i)\geq 2\pi$.
\item[(2)]%% \label{Itm:ChiGeq0}
  If $\chi(Z_i)\geq 0$ then either $a(Z_i)\geq \pi/2$ or $a(Z_i)=0$.
\item[(3)]%% \label{Itm:ZeroArea}
  Additionally, in the case that $a(Z_i)=0$, $Z_i$ is either:
  \begin{enumerate}
  \item[(a)] an essential torus or Klein bottle embedded in a chunk, hence $Z=Z_i$ is a torus or Klein bottle,
  \item[(b)] an annulus or a M\"obius band with boundary meeting no edges, or
  \item[(c)] a disk with $\bdy Z_i$ meeting exactly four edges of the chunk decomposition.
  \end{enumerate}
\end{enumerate}
\end{lemma}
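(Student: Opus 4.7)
The plan is to view the formula for $a(Z_i)$ as a discrete Gauss--Bonnet identity: each intersection of $\bdy Z_i$ with an interior or truncation edge of the chunk contributes an exterior angle $\pi/2$ (the counts $\#S$ and $\#B$), each meridianal truncation pair contributes $\pi$ (the count $\#P$), and the term $-2\pi\chi(Z_i)$ supplies bulk curvature. Two elementary observations drive everything. First, the boundary contribution $(\pi/2)(\#S)+(\pi/2)(\#B)+\pi(\#P)$ is a non-negative integer multiple of $\pi/2$, and $2\pi\chi(Z_i)$ is also an integer multiple of $\pi/2$, so $a(Z_i)\in(\pi/2)\ZZ$; in particular $a(Z_i)$ cannot lie in the open interval $(0,\pi/2)$. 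Second, $\chi(Z_i)\leq -1$ forces $-2\pi\chi(Z_i)\geq 2\pi$ while the boundary contribution is non-negative.

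Part~(1) is then immediate. For part~(2), $Z_i$ is a surface embedded in an orientable chunk of $Y$, so $\chi(Z_i)\in\{0,1,2\}$, and projective planes are excluded by orientability. The sphere case $\chi=2$ is ruled out via Assumption~\ref{ManifoldProj}: irreducibility of $Y-\Pi$ makes each chunk irreducible, so any embedded normal $S^2$ in a chunk would bound a ball and contradict essentiality of $Z$. For $\chi(Z_i)=0$ the formula reads $a(Z_i)=$ boundary contribution, which by the parity observation is either $0$ or at least $\pi/2$. For $\chi(Z_i)=1$, i.e.\ $Z_i$ a disk, the boundary contribution must offset $2\pi$, so I need $\#S+\#B+2\#P\geq 4$ to conclude $a(Z_i)\geq 0$; equality here gives $a(Z_i)=0$, any larger count gives $a(Z_i)\geq\pi/2$ by parity. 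Reading off the equality cases then yields part~(3): a closed $Z_i$ with $\chi=0$ is a torus or Klein bottle lying inside a single chunk, and essentiality together with the fact that $Z_i$ is a component of $Z$ forces $Z=Z_i$ (case 3a); a $Z_i$ with $\chi=0$ and boundary meeting no edges is an annulus or M\"obius band (case 3b); a disk with $\#S+\#B+2\#P=4$ is case 3c.

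The main technical obstacle is the exclusion of normal disks with $\#S+\#B+2\#P\in\{1,2,3\}$, without which the bound $a(Z_i)\geq 0$ in part~(2) would fail. A disk crossing a single edge is topologically impossible, since the single boundary loop $\bdy Z_i$ would have to meet that edge an even number of times. A disk crossing two edges is a bigon, and admits an isotopy through the chunk that reduces the weight pair $(s(Z),t(Z))$, contradicting the least-weight choice in Assumption~\ref{MinimalComplexity}; alternatively, if its boundary projects to a disk on $\Pi$ meeting $\pi(L)$ in just two points, it would violate weak primality. A disk crossing three edges is excluded by a similar case analysis on how $\bdy Z_i$ alternates between interior and truncation edges, again invoking weak primality together with the representativity bound $r(\pi(L),\Pi)\geq 4$; any such disk would either provide an essential compression for the chunk boundary or violate one of these hypotheses. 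This exclusion is the heart of the argument and the only place where the full weakly generalized alternating hypothesis is used; the rest of the proof is parity bookkeeping and the discrete Gauss--Bonnet identity.
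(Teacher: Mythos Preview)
The paper does not prove this lemma; it is quoted from \cite[Lemma~8.5]{Paper1} and used as a black box, so there is no in-paper argument to compare against directly.

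Your structural outline is the standard one and is almost certainly how the cited proof proceeds: the discreteness $a(Z_i)\in(\pi/2)\ZZ$, the case split on $\chi(Z_i)$, and the identification of the equality cases all work exactly as you describe, \emph{provided} one knows that normal disks satisfy $\#S+\#B+2\#P\geq 4$. You correctly flag this last point as the heart of the matter.

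A few places where your justifications can be sharpened or are slightly off. First, your parity argument is actually stronger than you state: because the diagram is checkerboard coloured, a closed curve on the chunk boundary alternates white/shaded at each interior-edge crossing and alternates interior-face/truncation-face at each truncation-edge crossing, so $\#S$ and $\#B$ are each separately even. Hence $\#S+\#B+2\#P$ is even, which rules out one and three crossings immediately, without the case analysis you attempt. Your stated reason for the one-crossing case (``meet that edge an even number of times'') is not the right mechanism in general, though it happens to give the right conclusion here. Second, for the two-crossing disk, your instinct to invoke weak primality and representativity is correct, but the argument should be organised as a dichotomy: either $\partial Z_i$ bounds a disk on $\Pi$, in which case weak primality forces both crossings onto the same interior edge and a normal-form axiom (no arc in a face with both endpoints on one edge) is violated; or $\partial Z_i$ is essential on $\Pi$, in which case $Z_i$ is a compressing disk for the chunk boundary meeting $\pi(L)$ fewer than four times, contradicting $r(\pi(L),\Pi)\geq 4$. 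The zero-crossing disk is handled similarly. Third, ``projective planes are excluded by orientability'' is too quick: $\RR P^2$ embeds in orientable $3$-manifolds such as $\RR P^3$. What you need is that the chunk is irreducible with nonempty boundary, which does preclude an embedded $\RR P^2$.
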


Later, we will use combinatorial area to control the number of pieces $Z_i$ for a surface of a given genus. But this type of argument does not work for normal components of the surface that have zero area, and so we need to consider them separately. If we assume that $\Pi$ is chosen so that $Y- \Pi$ admits no essential embedded tori or Klein bottles, then there will be no zero area tori or Klein bottles in the chunk decomposition. If we require all regions of $\pi(L)$ on $\Pi$ to be disks, that is, the diagram is \emph{cellular}, then there will be no zero area annuli or M\"obius bands. We therefore focus on disks with zero area.

The only possible disks with zero area are enumerated in~\cite[Lemma~8.6]{Paper1}; they have boundaries labeled one of $PP$, $PSS$, $SSSS$, $BBBB$, $BBSS$, or in the case of links, $PBB$. The instance of $P$ in the final case will be relabeled with two instances of $B$; this allows us to identify such disks as $BBBB$ disks and use tools in that setting.

The zero area disks are considered in~\cite{Paper1}. By Theorem~9.1 in that paper, $SSSS$ disks only appear as an essential compression disk for $\Pi$, with boundary meeting the diagram $\pi(L)$ in exactly four interior edges. They will not occur if the representativity satisfies $r(\pi(L),\Pi)>4$. By \cite[Theorem~10.1]{Paper1}, there are no $PP$ disks (nor $PS$ nor $SS$ disks). By \cite[Theorem~10.2]{Paper1}, there are no $PSS$ disks.

If $Z_i$ comes from a spanning surface or more general surface with boundary and meets truncation edges, then there will be letters $B$. We cannot rule these out, but in~\cite[Section~11]{Paper1} it is shown that if such disks arise, they form larger disks together. In particular, the following appears in that paper.

\begin{theorem}[Theorem~11.4 of \cite{Paper1}]\label{Thm:NonBBBBdeterminesZ}
  Assume that the hat-representativity satisfies $\hat{r}(\pi(L), \Pi)>4$, and $\pi(L)$ is not a string of bigons on $\Pi$. Let $Z$ be an essential surface in normal form with respect to the chunk decomposition. Then all subsurfaces $Z_i$ that are neither $BBBB$ disks nor $BBSS$ disks, together with the link $L$, determine the surface $Z$ up to isotopy.
\end{theorem}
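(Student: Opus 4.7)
The plan is to reconstruct the essential surface $Z$ from the given data, namely the link $L$ together with those subsurfaces $Z_i$ that are neither $BBBB$ nor $BBSS$ disks. Write $W$ for the union of the omitted $BBBB$ and $BBSS$ disks, so that $Z$ is obtained from the given non-$BBBB$/non-$BBSS$ subsurfaces by gluing in $W$ along matching arcs on chunk boundaries. The goal is to show that, up to isotopy, $W$ is determined by the rest of the data.

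First I would identify the combinatorial boundary of $W$. The boundaries of the given subsurfaces assign labels $S$, $B$, or $P$ to certain interior and truncation edges of the chunk decomposition. The remaining truncation edges, together with the arcs on $\bdy N(L)$ not already accounted for by the given subsurfaces, must be precisely where $W$ meets the chunk boundary. Thus the combinatorial boundary pattern of $W$ in each chunk is completely prescribed by the given data together with $L$.

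Next I would show that within each chunk the $BBBB$ and $BBSS$ disks assemble into larger planar regions that can be thought of as collars on segments of $\bdy N(L)$ between the places where non-$BBBB$/non-$BBSS$ subsurfaces attach. Once the boundary pattern is fixed and one knows which truncation edges must be hit, the combinatorial structure of such a collar is essentially forced: each $BBBB$ disk covers a portion of the link between two consecutive crossings, and a $BBSS$ disk appears exactly where the boundary of $W$ must turn across a pair of interior edges carrying $S$-labels. Hence, given its prescribed boundary, the collection of $BBBB$/$BBSS$ disks is determined as a combinatorial pattern, which gives a candidate for $W$ up to isotopy within each chunk.

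The main obstacle is the uniqueness step: that no two genuinely distinct configurations of $BBBB$/$BBSS$ disks can realize the same boundary pattern. Suppose toward contradiction that $W$ and $W'$ are two such configurations. Gluing $W$ to $W'$ along their common boundary and applying standard innermost-disk and cut-and-paste arguments, any nontrivial difference between $W$ and $W'$ yields either an essential compressing disk for $\Pi^{+}$ or $\Pi^{-}$ meeting $\pi(L)$ in at most four points, contradicting the assumption $\hat{r}(\pi(L),\Pi) > 4$, or a local configuration forcing $\pi(L)$ to contain a string of bigons on $\Pi$, contradicting the other hypothesis. This uniqueness, combined with the previous step, determines $W$ and hence $Z$ up to isotopy. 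The delicate part is controlling how $BBBB$ disks can concatenate along truncation edges into longer collars, and showing that any rerouting of such a collar propagates into one of the two forbidden configurations above rather than remaining confined to a small local modification.
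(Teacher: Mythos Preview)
This paper does not contain a proof of \refthm{NonBBBBdeterminesZ}. The theorem is quoted from \cite{Paper1} (Theorem~11.4 there) and used as a black box; the present paper provides no argument for it. So there is nothing here to compare your proposal against.

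That said, your outline captures the correct high-level strategy that one would expect the proof in \cite{Paper1} to follow: the $BBBB$ and $BBSS$ disks do assemble into larger ``collar'' regions along $\bdy N(L)$, and the two hypotheses ($\hat{r}(\pi(L),\Pi)>4$ and $\pi(L)$ not a string of bigons) are precisely what one needs to rule out ambiguity in how those collars are placed. However, several steps in your sketch are genuine gaps rather than routine. First, your claim that the boundary pattern of $W$ is determined by the given subsurfaces and $L$ is not obvious: the non-$BBBB$/non-$BBSS$ pieces tell you where \emph{they} meet chunk edges, but inferring from this exactly which truncation edges $W$ must cross, and with what multiplicity, requires knowing the slope of $\bdy Z$ on $\bdy N(L)$, which is part of what you are trying to recover. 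Second, the uniqueness argument via ``standard innermost-disk and cut-and-paste'' is doing all the real work and is left entirely unspecified; in particular, you have not explained why a nontrivial difference $W\cup W'$ produces a compressing disk meeting $\pi(L)$ in at most four points rather than, say, six or eight, nor why the string-of-bigons hypothesis enters. These are exactly the points where the actual proof in \cite{Paper1} (their Section~11) requires careful case analysis, and your sketch does not supply it.
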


\begin{assumption}\label{CellularRep}
To rule out surfaces with zero area that are not disks, from now on we will assume that the diagram $\pi(L)$ is \emph{cellular}, i.e.\ all regions of $\Pi-\pi(L)$ are disks. Assume also that $Y-N(\Pi)$ admits no embedded essential tori or Klein bottles. To rule out $SSSS$ disks, we further assume $r(\pi(L),\Pi)>4$; note this implies that $\hat{r}(\pi(L),\Pi)>4$, so \refthm{NonBBBBdeterminesZ} also applies.

\end{assumption}

%%%%%%%%%%%%%%%%%%%%%%%%%%%%%%%%%%%%%%%%%%%%%%%%%%%%%%%%%%%%%%%%%
\section{Meridianal compressions}\label{Sec:MeridianCompressions}

Let $Z$ be a surface properly embedded in $Y-N(L)$. We say $Z$ is \emph{meridianally compressible} if there is an essential meridianal annulus $A$ with one component $\bdy_1 A$ of $\bdy A$ on $Z$, and the other component $\bdy_2 A$ of $\bdy A$ forming a meridian on $\bdy N(L)$. The requirement that $A$ be essential means it is not ambient isotopic rel $\bdy_1 A$ to an embedded annulus $A'$ in $Z$ with $\bdy A'$ the disjoint union of $\bdy_1 A$ and a meridian of $N(L)$. If there is no essential meridianal annulus for $Z$, we say it is \emph{meridianally incompressible}. For a meridianally compressible surface, performing surgery along $A$ yields a new surface with boundary forming a meridian of $N(L)$; this is called a \emph{meridianal compression} of $Z$.

As in \cite{HTT1}, we will count surfaces after first performing a maximal number of meridianal compressions. However, unlike \cite{HTT1}, we allow surfaces with additional boundary components and nonorientable surfaces. 
The next lemmas give bounds on the total number of meridianal compressions we must make, in terms of the Euler characteristic $\chi$ of the surface. 
Recall that the genus of the surface is the maximum number of disjoint simple closed curves that can be drawn on the surface without disconnecting it. For orientable surfaces, we denote the genus by $g_O$. For a nonorientable surface, the genus is equal to the number of cross-caps attached to a sphere; it is often called a nonorientable genus. Denote it by $g_N$. Then for a closed orientable surface $Z$, $\chi(Z)=2-2g_O$, and for a closed nonorientable one, $\chi(Z)=2-g_N$. For surfaces with $b$ boundary components, the Euler characteristic is $\chi(Z)=2-2g_O-b$ or $\chi(Z)=2-g_N-b$, for the orientable and nonorientable case respectively. We often denote genus by $g$, for both orientable and nonorientable surfaces, where this does not affect the calculation. 

\begin{lemma}\label{Lem:MeridCompressGeneral}
  Suppose $Z'$ is an essential surface with negative Euler characteristic, with or without boundary, orientable or nonorientable, that is properly embedded in $Y-N(L)$. Let $Z$ be obtained from $Z'$ by performing a maximal sequence of meridianal compressions. Then the Euler characteristic of $Z$ is $\chi(Z)=\chi(Z')$, and the meridianal compressions add at most $-4\chi(Z)+2$ boundary components. These are tubed in pairs to obtain $Z'$ from $Z$.

\end{lemma}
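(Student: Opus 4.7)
The plan is to establish the three conclusions in turn. First, a single meridianal compression is the surgery that removes an open annular collar of $\partial_1 A$ from the current surface and replaces it with two parallel pushed-off copies of the annulus $A$. Since both the removed annular neighborhood and each added copy of $A$ have Euler characteristic zero, $\chi$ is unchanged, and induction on the number of compressions gives $\chi(Z) = \chi(Z')$. The same description shows that each compression introduces exactly two new boundary components of $Z$, namely the two copies of $\partial_2 A$ on $\partial N(L)$; these are parallel meridians cobounding an annulus on $\partial N(L)$. Isotoping across this annulus (equivalently, tubing through $N(L)$) reverses one compression, so iterating the reverse over all $k$ compressions recovers $Z'$ from $Z$, with the $2k$ new meridianal boundaries of $Z$ grouped into $k$ naturally-determined pairs, each of which gets tubed.

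The main work is to bound the number of compressions $k$ by $-2\chi(Z)+1$, giving the claimed $2k \leq -4\chi(Z)+2$. The approach is to realize the $k$ sequential compressions as $k$ simultaneous, pairwise disjoint essential meridianal annuli $A_1,\ldots,A_k$ in $Y-N(L)$, so that the curves $\gamma_i = \partial_1 A_i$ become $k$ pairwise disjoint simple closed curves on $Z'$. Each $\gamma_i$ is essential on $Z'$: if $\gamma_i$ bounded a disk $D\subset Z'$, then $D \cup A_i$ would be a disk in $Y-N(L)$ with boundary a meridian of $L$, which is ruled out by the weakly generalized alternating hypotheses on $L$ (such a disk would split off an unknotted component). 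The $\gamma_i$ can further be chosen pairwise non-isotopic on $Z'$: if $\gamma_i, \gamma_j$ cobound an annulus $B \subset Z'$, then $A_i \cup B \cup A_j$ together with the annulus on $\partial N(L)$ between the two meridianal boundaries forms a closed torus, which by Assumption~\ref{CellularRep} must bound a solid torus; this parallelism forces one of $A_i, A_j$ to be inessential, contradicting the choice of the sequence.

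With the $\gamma_i$ realized as $k$ pairwise disjoint, pairwise non-isotopic, essential simple closed curves on the connected compact surface $Z'$, the standard surface-topology bound gives $k \leq -2\chi(Z')+1$. (The sharp bound in the orientable case is $3g+b-3$ via pants decomposition, which is strictly smaller, and the non-orientable analogue is similar; both are bounded by $-2\chi+1$ when $\chi<0$, so the weaker bound suffices here.) Combining with $\chi(Z)=\chi(Z')$ gives $2k \leq -4\chi(Z)+2$.

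The hard part will be the simultaneous realization step: the meridianal compressions are defined with respect to the successive surfaces $Z_0=Z', Z_1, \ldots, Z_k=Z$, not with respect to $Z'$ itself, so one must argue that each essential meridianal annulus $A_{i+1}$ for $Z_i$ can be transported back to an essential meridianal annulus for $Z'$ disjoint from $A_1,\ldots,A_i$. This requires a careful cut-and-paste argument, handling the case where a tube from a previous compression meets the next annulus, and in the non-orientable setting the possibility of one-sided curves on $Z'$ adds further subtlety.
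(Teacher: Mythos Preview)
Your overall strategy matches the paper's: bound the number of meridianal compressions by the maximal number of pairwise disjoint, pairwise non-isotopic, essential two-sided simple closed curves on $Z'$, then translate that into a bound on new boundary components. The paper simply cites the pants-decomposition counts ($3g-3+b$ in the orientable case, $2g-3+b$ two-sided curves in the nonorientable case) and checks these give at most $-4\chi+2$ new boundary components; it does not attempt the simultaneous-realization or non-isotopy arguments you outline.

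Two concrete points where your proposal diverges from, or falls short of, the paper:

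\textbf{One-sided curves.} You flag these only as a lingering subtlety. The paper disposes of them at the outset: if $\partial_1 A$ is one-sided on $Z'$, then $A$ wraps twice around it and closes up to an embedded M\"obius band with meridianal boundary in $Y-N(L)$. By \cite[Theorem~4.6]{HowiePurcell} no such M\"obius band exists for a weakly generalised alternating link, so every meridianal compression is along a two-sided curve. This is the key input you are missing, and it is what makes the two-sided curve count on $Z'$ the correct thing to bound.

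\textbf{Your non-isotopy argument does not work as written.} You form a torus from $A_i\cup B\cup A_j$ together with an annulus on $\partial N(L)$ and invoke Assumption~\ref{CellularRep} to say it bounds a solid torus. But Assumption~\ref{CellularRep} only rules out essential tori in $Y-N(\Pi)$, not in $Y-N(L)$; your torus lives in the latter. Moreover, the two meridians $\partial_2 A_i$, $\partial_2 A_j$ may lie on different components of $\partial N(L)$, in which case there is no annulus on $\partial N(L)$ joining them and no torus is formed at all. Even when the torus exists and bounds a solid torus, deducing that one of $A_i,A_j$ is inessential in the precise sense of the paper's definition (isotopic rel $\partial_1 A$ to an annulus in the surface ending on a meridianal boundary component) requires further argument. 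The paper sidesteps all of this by not arguing non-isotopy directly.
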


Note we could prove a better upper bound of $-2\chi(Z)$ when the surface is closed and orientable, by repeating an argument of \cite{HTT1}. However, since we only need a bound, and not a sharp bound, we state the most general bound to avoid multiple cases later.

\begin{proof}

A meridianal compression will be performed along a simple closed curve on the surface. We consider orientable and nonorientable surfaces, and for a nonorientable surface, such curve can be either one-sided or two-sided.

First, consider one-sided curves. Note that if an annulus $A$ has boundary $\bdy_1 A$ on a one-sided curve, then $A$ must wrap twice around the curve, and gluing $A$ to itself along $\bdy_1 A$ forms a M{\"o}bius band with meridianal boundary. 
However, \cite[Theorem~4.6]{HowiePurcell} implies that there is no embedded M\"obius band with meridianal boundary in a weakly generalized alternating link. Thus we may rule out one-sided curves for meridianal compressions.

Now consider two-sided curves, on orientable and nonorientable surfaces. The topological effect of a meridianal compression is to remove an open neighborhood of a (2-sided) curve $\bdy_1 A$ on $Z'$. This is an annulus, and the Euler characteristic of the surface before and after removing the interior of an annulus is unchanged. 

Note meridianal compression adds two boundary components to the surface. Thus to bound the number of added boundary components obtained by a maximal sequence of meridianal compressions, we must bound the number of such compressions. 

If $Z'$ is a closed orientable surface with negative Euler characteristic, it contains at most $3g-3$ disjoint nonparallel curves; these cut $Z'$ into pairs of pants. More generally, if $Z'$ is orientable with genus $g$ and $b$ boundary components, it contains $3g-3+b$ disjoint nonparallel curves that are not parallel to boundary components. The number of boundary components after removing a regular neighborhood of each of these curves is $6g-6 +2b \leq -3(2-2g-b)=-3\chi(Z')<-4\chi(Z')+2$. This is an upper bound for the number of boundary components produced by meridianal compression in this case.

If $Z'$ is a nonorientable surface with $b$ boundary components, it can be cut into planar surfaces and M\"obius bands along $g$ disjoint nonparallel two-sided curves. An additional $g+b-3$ curves cut the planar surfaces into pairs of pants, giving $2g-3+b$ disjoint nonparallel two-sided curves. The total number of boundary curves after removing regular neighborhoods of these curves is at most $4g-6+2b < 4g+4b - 8 + 2 = -4\chi(Z)+2$. 
\end{proof}

We also include one result from~\cite{Paper1} that allows us to rule out meridianal compressions for many surfaces with boundary on a link.

\begin{lemma}\label{Lem:MeridCompressRestrict}\cite[Proposition~7.1~(1)]{Paper1}
There is no meridianal compression of $Z$ to a component $L_i$ of $\bdy N(L)$ for which $\bdy Z \cap L_i$ is nonempty and non-meridianal.
\end{lemma}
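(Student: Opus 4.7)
The plan is a short contradiction argument that reduces to a slope comparison on the torus $L_i$. Suppose toward contradiction that there is a meridianal compression of $Z$ to $L_i$, witnessed by an essential meridianal annulus $A$ with $\bdy_1 A$ an essential simple closed curve in the interior of $Z$ and $\bdy_2 A$ a meridian on $L_i$. After a general position perturbation together with standard innermost-disk surgery, we may arrange that $A \cap Z = \bdy_1 A$ and $A \cap \bdy N(L) = \bdy_2 A$, so the interior of $A$ lies in $Y \setminus (N(L) \cup Z)$. Since $\bdy_1 A \cap \bdy_2 A = \emptyset$ and $\bdy_1 A$ does not meet $\bdy Z$, the curve $\bdy_2 A$ is disjoint from $\bdy Z \cap L_i$ on the torus $L_i$.

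The key step is then the classification of essential simple closed curves on a torus. By hypothesis, each component of $\bdy Z \cap L_i$ is an essential simple closed curve on $L_i$ of non-meridianal slope; the curve $\bdy_2 A$ is an essential simple closed curve on $L_i$ of meridianal slope. On a torus, any two disjoint essential simple closed curves are isotopic, hence carry the same slope. But the meridianal slope and a non-meridianal slope are by definition distinct, giving the desired contradiction. Therefore no such essential annulus $A$ can exist.

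I expect the main obstacle to be the cleanup step: ensuring that $A$ can be chosen with the tidy intersection pattern $A \cap Z = \bdy_1 A$. Any extra loops of intersection in the interior of $A$ with $Z$ must be removed by innermost-disk surgery, which uses the incompressibility of $Z$ to eliminate loops essential in $Z$, and uses an isotopy of $A$ to eliminate loops inessential in both surfaces. Arcs of intersection between the interiors of $A$ and $Z$ cannot arise because $\bdy A$ is disjoint from $\bdy Z$. One must check that the essentialness of $A$ (in the sense of the definition given earlier in the excerpt, that $A$ is not ambient isotopic rel $\bdy_1 A$ into $Z$) persists through this cleanup, so that the contradiction is not avoided by trivialising $A$. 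Once these routine surgery steps are in place, the torus slope argument closes the proof immediately.
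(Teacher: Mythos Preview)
The paper does not prove this lemma; it is simply quoted from \cite[Proposition~7.1~(1)]{Paper1}. Your slope argument on the torus $L_i$ is the correct idea and is almost certainly what appears there.

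There is one genuine slip. In justifying the cleanup you write that ``arcs of intersection between the interiors of $A$ and $Z$ cannot arise because $\bdy A$ is disjoint from $\bdy Z$.'' But $\bdy A$ contains $\bdy_2 A\subset L_i$, and by hypothesis $\bdy Z\cap L_i$ is nonempty; a priori these curves meet---indeed, that a meridian \emph{must} meet every non-meridianal curve on $L_i$ is exactly the contradiction you are aiming for---so this justification is circular. The repair is easy. Either observe that for ``surgery along $A$'' (as in the paper's definition of meridianal compression) to yield an embedded surface one already needs $\operatorname{int}(A)\cap Z=\emptyset$, so $A\cap Z=\bdy_1 A$ holds from the outset and no cleanup is required; or, if you do not want to lean on that reading of the definition, eliminate arcs by an outermost-arc argument using the \emph{boundary} incompressibility of $Z$ (an outermost arc of $A\cap Z$ on $A$, with both endpoints on $\bdy_2 A$, cuts off a candidate boundary-compressing disk for $Z$), together with irreducibility and boundary irreducibility of $Y-N(L)$. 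Either way, once $\bdy_2 A\cap(\bdy Z\cap L_i)=\emptyset$ is established, your slope comparison finishes the proof.
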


\begin{assumption}\label{SurfaceAssumption}
From now on, assume $Z$ is essential, properly embedded in $Y-N(L)$. The surface $Z$ is not necessarily orientable, and possibly has boundary components, meridianal or nonmeridianal, that are always on $\bdy N(L)$. 
\end{assumption}

%%%%%%%%%%%%%%%%%%%%%%%%%%%%%%%%%%%%%%%%%%%%%%%%%%%%%%%%%%%%%%%%%
\section{Counting subsurfaces and their boundary curves}\label{Sec:Counting}

In this section, we begin our count of surfaces by bounding the total number of subsurfaces in a chunk making up a normal surface $Z$, and the total number of words associated with their boundary components.

\begin{lemma}\label{Lem:BddSurfaceCount}
Suppose $Z$ is meridianally incompressible. Let $Z=\cup_{i=1}^m Z_i$, where the $Z_i$ are normal subsurfaces in chunks. Then the number of subsurfaces $Z_i$ that are not disks with boundary $BBBB$ or $BBSS$ is at most $-4\chi(Z)$.
\end{lemma}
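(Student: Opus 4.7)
The plan is to turn the statement into an area counting argument using the Gauss--Bonnet formula \refeqn{GB} together with the classification of zero-area normal pieces in \reflem{Areas}, and then show that under the running hypotheses every zero-area piece is forced to be either a $BBBB$ or a $BBSS$ disk.

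First, I would use \refeqn{GB} to write $a(Z) = -2\pi\chi(Z)$, which is a finite nonnegative quantity since $Z$ is essential (and, via \reflem{MeridCompressGeneral} style reasoning, has $\chi(Z) \leq 0$; the bound is vacuous or immediate otherwise). Summing over the pieces, $\sum_{i=1}^m a(Z_i) = -2\pi\chi(Z)$. The strategy is then to lower-bound each summand $a(Z_i)$ for the pieces we want to count and discard the rest as having area $0$.

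Second comes the main step: identifying precisely which pieces can have $a(Z_i) < \pi/2$. By \reflem{Areas}, any such piece has $a(Z_i) = 0$ and falls into one of three types: (a) an essential torus or Klein bottle in a chunk, (b) an annulus or M\"obius band with boundary meeting no edges, or (c) a disk meeting exactly four edges. Type (a) is ruled out directly by \refdef*{CellularRep}, which forbids essential tori and Klein bottles in $Y - N(\Pi)$. Type (b) is ruled out because the diagram is cellular: the boundary of such an annulus or M\"obius band would lie in a single face, which is a disk, and thus cannot be essential in that face. For type (c), the possible boundary words are $PP,\, PSS,\, SSSS,\, BBBB,\, BBSS$, and $PBB$ by \cite[Lemma~8.6]{Paper1}. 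Here $PP$ and $PSS$ are ruled out by Theorems~10.1 and~10.2 of \cite{Paper1}, $SSSS$ disks are ruled out by $r(\pi(L),\Pi) > 4$ in \refdef*{CellularRep} (via Theorem~9.1 of \cite{Paper1}), and $PBB$ disks are relabelled as $BBBB$ disks by our convention. This leaves only $BBBB$ and $BBSS$ disks as the zero-area possibilities.

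Third, combine the two facts. Let $N$ be the number of subsurfaces $Z_i$ that are neither $BBBB$ nor $BBSS$ disks; by the dichotomy above, each such $Z_i$ satisfies $a(Z_i) \geq \pi/2$ (either via $a(Z_i) \geq 2\pi$ when $\chi(Z_i) < 0$ or via the $a(Z_i) \geq \pi/2$ case of \reflem{Areas}). Then
\[
 \frac{\pi}{2} N \;\leq\; \sum_{i=1}^m a(Z_i) \;=\; a(Z) \;=\; -2\pi\chi(Z),
\]
yielding $N \leq -4\chi(Z)$, as required.

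The main obstacle is the zero-area classification step, which requires carefully invoking each clause of \refdef*{CellularRep} along with the non-existence theorems from \cite{Paper1} to exhaust all the cases of \cite[Lemma~8.6]{Paper1}. The meridianal incompressibility hypothesis enters implicitly: it ensures $Z$ has already been arranged in normal, meridianal form of least weight (\refdef*{MinimalComplexity}) with all labels $S,B,P$ well-defined on the $\bdy Z_i$, so that the area formula \refeqn{ZjArea} and the classification of \cite[Lemma~8.6]{Paper1} apply to every piece. Once the zero-area pieces are pinned down to $BBBB$ and $BBSS$ disks, the counting is just a one-line Gauss--Bonnet comparison.
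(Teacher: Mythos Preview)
Your proposal is correct and follows essentially the same approach as the paper's proof: use Gauss--Bonnet to equate $\sum a(Z_i)$ with $-2\pi\chi(Z)$, invoke \reflem{Areas} together with Assumption~\ref{CellularRep} and the results of \cite{Paper1} to show that every zero-area piece must be a $BBBB$ or $BBSS$ disk, and then bound the count of the remaining pieces by $\pi/2$ each. The paper organizes the count slightly differently, splitting into pieces with $\chi(Z_i)\in\{0,1\}$ versus $\chi(Z_i)\leq -1$ before applying the area bounds, but the substance is identical.
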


\begin{remark}
Recall that when we have a link with mixed boundary components, both meridianal and non-meridianal, we relabel all instances of $P$ with two instances of $B$. Thus in \reflem{BddSurfaceCount}, we view $PBB$ disks as $BBBB$ disks.
\end{remark}

\begin{proof}[Proof of \reflem{BddSurfaceCount}]
Let $n_1$ be the number of the $Z_i$ that have Euler characteristic $0$ or $1$, and are not disks with boundary $BBBB$ or $BBSS$. Let $n_2$ be the number with Euler characteristic at most $-1$. Then the total number of subsurfaces $Z_i$ considered in the lemma is $m=n_1+n_2$. By the Gauss--Bonnet theorem~\refeqn{GB}, by equation~\eqref{Eqn:ZArea}, and by \reflem{Areas}~(1),
\[
-2\pi\chi(Z) \: = \: a(Z) \:  = \: \sum_{i=1}^m a(Z_i) \: \geq \: \sum_{\chi(Z_i)=0,1} a(Z_i) + n_2\, 2\pi.
\]
For the subsurfaces with $\chi(Z_i)=1$ or $0$, we need to determine which have $a(Z_i)=0$. By \reflem{Areas}~(3), these can be tori, Klein bottles, annuli or M\"obius bands meeting no edges, or disks meeting four edges. Assumption~\ref{CellularRep}, that all regions of $\pi(L)$ on $\Pi$ are disks, rules out annuli and M\"obius bands meeting no edges: the boundary curve would lie in one of the cellular faces, and hence bound a disk there, which is not allowed for surfaces in normal form. Assumption~\ref{CellularRep} also requires that $Y-N(\Pi)$ admits no essential tori and Klein bottles, which rules out these surfaces. Thus if $a(Z_i)=0$ then $Z_i$ is a normal disk meeting four edges.

If $Z$ has only meridianal boundary components and $a(Z_i)=0$, then it is a disk with boundary of the form $PSS$, $PP$, or $SSSS$. But these are ruled out by \cite[Theorem~10.1]{Paper1}, \cite[Theorem~10.2]{Paper1}, and \cite[Theorem~9.1]{Paper1}, respectively, using the hypothesis that $r(\pi(L),\Pi)>4$ for the $SSSS$ case. If $Z$ has a non-meridianal boundary component, then we consider all intersections with truncation edges as labeled $B$. The only remaining normal disks with $a(Z_i)=0$ are $BBBB$ or $BBSS$ disks, ruled out by hypothesis.

Then \reflem{Areas}~(2) implies
\[ \sum_{\chi(Z_i)=0,1} a(Z_i) \geq n_1\, \frac{\pi}{2}, \quad \mbox{and hence}\]
\[ -2\pi\chi(Z) \geq n_1\,\frac{\pi}{2} + n_2\, 2\pi \geq \frac{\pi}{2}\, m.
\]
So $m\leq -4\chi(Z)$. Therefore, there are at most $-4\chi(Z)$ subsurfaces $Z_i$ that are not disks labeled $BBBB$ or $BBSS$.
\end{proof}

\begin{lemma}\label{Lem:WordAndLetterCount}
Let $Z$ be meridianally incompressible, properly embedded in $Y-N(L)$ in normal form. Suppose further that $Z$ satisfies one of the following cases.
\begin{enumerate}
\item[(1)] The surface $Z$ has only meridianal boundary.
\item[(2)] The surface $Z$ has at least one non-meridianal boundary component on $L$.
\end{enumerate}
Then there are at most $-10\chi(Z)$ boundary components on any $Z_i$ that
are not associated to $BBBB$ or $BBSS$ disks. Moreover, the number of intersections of such a boundary component with chunk edges is at most $-20\chi(Z)$.
\end{lemma}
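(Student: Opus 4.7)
My approach will be a direct combinatorial area computation, using the area formula~\eqref{Eqn:ZjArea} together with Gauss--Bonnet~\eqref{Eqn:GB}. Rearranging \eqref{Eqn:ZjArea}, I observe that the total number of intersections of $\partial Z_i$ with chunk edges is
\[
(\#S)_i + (\#B)_i + 2(\#P)_i \;=\; \tfrac{2}{\pi}\,a(Z_i) + 4\chi(Z_i).
\]
By Gauss--Bonnet, $\sum_j a(Z_j) = -2\pi\chi(Z)$, and by \reflem{Areas} every $a(Z_j)\geq 0$; hence $a(Z_i)\leq -2\pi\chi(Z)$. Since $Z_i$ is a compact surface, $\chi(Z_i)\leq 1$, which gives the per-subsurface bound
\[
(\#S)_i + (\#B)_i + 2(\#P)_i \;\leq\; -4\chi(Z)+4.
\]
Any single boundary component of $Z_i$ carries only a subset of these letters, so its edge-intersection count is bounded by the same quantity. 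Once I verify $\chi(Z)\leq -1$ below, elementary arithmetic gives $-4\chi(Z)+4\leq -20\chi(Z)$, establishing the second claim.

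For the first claim, I will argue that every boundary component of $Z_i$ must contribute at least one letter; then the number of boundary components is bounded by the total letter count, which is in turn bounded by the edge-crossing count, giving the estimate $-4\chi(Z)+4\leq -10\chi(Z)$. A boundary curve of $Z_i$ that crossed no chunk edges would lie entirely inside a single face of the chunk. Interior faces are disks by Assumption~\ref{CellularRep}, and truncation faces are always quad disks, so such a curve would bound a disk on the chunk boundary. If the curve is interior to $Z$, that disk yields a compressing disk for $Z$, contradicting essentiality; if the curve lies on $\partial Z\subset \partial N(L)$, the disk gives an isotopy reducing the weight $(s(Z),t(Z))$, contradicting the least-weight choice of Assumption~\ref{MinimalComplexity}.

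It remains to verify $\chi(Z)\leq -1$. The only alternative, $\chi(Z)=0$, would force $a(Z)=0$ and so each $a(Z_i)=0$; by \reflem{Areas}(3), together with Assumption~\ref{CellularRep} ruling out essential tori, Klein bottles, and zero-area annuli or M\"obius bands, and the exclusion of $PP$, $PSS$, and $SSSS$ disks from \cite[Theorems~9.1, 10.1, 10.2]{Paper1}, each $Z_i$ must then be a $BBBB$ or $BBSS$ disk, and the lemma's statement is vacuous. Cases~(1) and~(2) of the statement differ only by the convention of relabeling each $P$ as $BB$ in the presence of non-meridianal boundary; this does not alter the area formula or the above counts. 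The main subtlety I expect is justifying the step that excludes boundary components of $Z_i$ sitting in a single face: this requires combining the definition of normal form from~\cite{HowiePurcell}, the cellularity of $\pi(L)$ via Assumption~\ref{CellularRep}, and the weight-reduction mechanism of Assumption~\ref{MinimalComplexity}, and is where the argument is most delicate.
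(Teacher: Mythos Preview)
Your proof is correct, and it differs meaningfully from the paper's argument. The paper bounds the \emph{total} number $w$ of edge-intersections across all non-$BBBB$/$BBSS$ pieces, by summing \refeqn{Letters} over the $Z_i$ and absorbing the positive $2\pi\chi(Z_i)$ terms via the disk count from \reflem{BddSurfaceCount}; this yields $w\leq -20\chi(Z)$, and then each curve having at least two edge-intersections gives the $-10\chi(Z)$ bound on boundary components. Your route instead bounds a \emph{single} piece $Z_i$ directly: from $a(Z_i)\leq a(Z)=-2\pi\chi(Z)$ and $\chi(Z_i)\leq 1$ you get the sharper per-piece estimate $-4\chi(Z)+4$, and only then compare to $-20\chi(Z)$ and $-10\chi(Z)$. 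This is more elementary---it never invokes \reflem{BddSurfaceCount}---and actually proves more than the lemma claims.

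Two minor tightenings. First, ``$Z_i$ is a compact surface, so $\chi(Z_i)\leq 1$'' is false as stated (a sphere has $\chi=2$); the correct justification is that \reflem{Areas} forces $a(Z_i)\geq 0$, which for a closed piece gives $\chi(Z_i)\leq 0$ and rules out spheres. Second, the step excluding boundary components of $Z_i$ lying in a single face is less delicate than you suggest: it is one of the defining conditions of normal form in \cite[Definition~3.7]{HowiePurcell} that no curve of $Z$ bounds a disk in a face, so you may cite that directly rather than arguing via compressibility and weight reduction. With those adjustments the argument is clean.
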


\begin{proof}
Recall that $Z=\bigcup_{i=1}^m Z_i$. By \refeqn{ZArea}, and by Gauss--Bonnet \refeqn{GB}, the combinatorial area $a(Z)$ satisfies:
\[
-2\pi\chi(Z) \: = \: a(Z) \: = \: \sum_{i=1}^m a(Z_i).
\]
Each time a boundary component of $Z$ meets an interior or truncation edge of the chunk decomposition, there is a contribution of $\pi/2$ to the area. More precisely, if $w_i$ is the total number of intersections of all the boundary components of $Z_i$ with chunk edges, then
\begin{equation}\label{Eqn:Letters}
a(Z_i) \: = \: w_i\frac{\pi}{2} - 2\pi\chi(Z_i).
\end{equation}
Let $w$ be the number of intersections with chunk edges for all boundary components of $Z$ that are not associated to $BBBB$ or $BBSS$ disks.

In case~(1), when the surface has meridianal boundary, all words are in letters $P$ and $S$, so $w=\sum w_i$. In case~(2), when $Z_i$ is a $BBBB$ or $BBSS$ disk, we know its combinatorial area is $0$. Moreover, we know that $2\pi\chi(Z_i)\leq 0$ unless $Z_i$ is a disk. Thus from equation~\refeqn{Letters}, in both cases we have
\[
  a(Z) \: \geq \: w \frac{\pi}{2}
  - 2\pi\cdot \left|\{Z_i: Z_i \mbox{ is a disk, but not a $BBBB$ or $BBSS$ disk}\}\right|.
\]
The number of $Z_i$ that are disks but not $BBBB$ or $BBSS$ disks is at most the total number of such subsurfaces $Z_i$, which is at most $-4\chi(Z)$ by \reflem{BddSurfaceCount}. Thus
\[ -2\pi\chi(Z) \: \geq \: w\frac{\pi}{2} + 2\pi(4\chi(Z)), \]
and so $-20\chi(Z) \geq w$.

Any curve of $\partial Z_i$ that does not come from a $BBBB$ or $BBSS$ disk can contain at most $w$ intersections with chunk edges, so the total number of intersections is at most $w\leq -20\chi(Z)$. Any curve must have at least two intersections, since all regions of the chunks are disks and thus any closed curve $\bdy Z_i$ must enter and exit each region. Hence any $Z_i$ has at most $-10\chi(Z)$ boundary components.
\end{proof}

%%%%%%%%%%%%%%%%%%%%%%%%%%%%%%%%%%%%%%%%%%%%%%%%%%%%%%%%%%%%%%%%%
\section{Placing curves on the chunk boundary}\label{Sec:Intersections}

We now proceed with counting options for potential curves of intersection of $Z_i$ with the chunk boundary. Recall that for a fixed component $\Pi_j$ of the projection surface $\Pi$, there are two associated boundary components $\Pi_j^+$ or $\Pi_j^-$ for the chunk decomposition.  However, $\Pi^-$ is glued to $\Pi^+$, so once the curves are determined in each $\Pi_j^+$, they uniquely determine those on the $\Pi_j^-$. Thus we need only count the curves on $\Pi^+$.

We will count the curves by counting how they intersect the edges of the chunk decomposition: truncation edges and interior edges. 

\begin{definition}
An ordered subset of chunk edges on $\Pi_j^+$, up to cyclic order,  is called a \emph{combination}. 
\end{definition}

\begin{lemma}\label{Lem:CombinationsDetermineSubsfce}
A boundary component of $Z_i$ is determined up to isotopy on the chunk boundary by the respective combination.
\end{lemma}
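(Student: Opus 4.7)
The plan is to reconstruct the boundary curve from its combination by working face by face, leveraging the cellularity hypothesis in Assumption~\ref{CellularRep}. Let $\gamma$ be a boundary component of $Z_i$ on a chunk boundary, and let $e_1,\ldots,e_k$ be the cyclic sequence of chunk edges recorded by its combination. Because $Z_i$ is in normal form, $\gamma$ meets the $1$-skeleton of the chunk boundary transversely, so $\gamma$ decomposes into a cyclic union of simple arcs $\alpha_1,\ldots,\alpha_k$, with each $\alpha_\ell$ contained in a single face $F_\ell$ of the chunk boundary and running from $e_\ell$ to $e_{\ell+1}$.

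Next, I would invoke the standard planar fact that two simple properly embedded arcs in a disk with endpoints on the same pair of boundary edges are ambient isotopic rel the boundary, and more generally a disjoint collection of simple arcs in a disk is determined up to ambient isotopy rel $\bdy$ by the cyclic pairing of its endpoints on the boundary. By Assumption~\ref{CellularRep}, every region of $\Pi-\pi(L)$ is a disk, and every truncation face is a quadrilateral, hence also a disk. So the planar fact applies to every $F_\ell$, and each arc $\alpha_\ell$ is determined up to isotopy in $F_\ell$ rel $\bdy F_\ell$ by the combination.

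To pass from arc-by-arc uniqueness to the statement for $\gamma$, I would take another boundary component $\gamma'$ with the same combination, first adjust $\gamma'$ by an ambient isotopy supported in a small neighbourhood of the $1$-skeleton to align the intersection points of $\gamma$ and $\gamma'$ with each edge (the cyclic order of intersections along each edge is forced by the cyclic order of the combination, so this matching is consistent around every edge and vertex), and then isotope each arc of $\gamma'$ to the corresponding arc of $\gamma$ inside its face rel endpoints. The face-by-face isotopies glue along edges to a single ambient isotopy of the chunk boundary taking $\gamma'$ to $\gamma$.

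The main obstacle I anticipate is verifying that the face $F_\ell$ between two consecutive edges $e_\ell, e_{\ell+1}$ is uniquely recovered from the combination: a priori two edges could co-bound more than one face (for instance, across a bigon region of $\pi(L)$), so that the local choice of face is not literally determined by the unordered edge pair. I would handle this by observing that the cyclic ordering of the full combination, together with the requirement that $\gamma$ be simple, pins down which side of the edge pair the arc $\alpha_\ell$ lies on, and hence the face, via the standard ``face trace'' of a curve on a $2$-complex. Once this is settled, the planar fact applies as above to complete the proof.
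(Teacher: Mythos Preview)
Your proposal is correct and follows essentially the same approach as the paper: break $\gamma$ into arcs lying in faces, use cellularity so that each face is a disk and hence each arc is determined up to isotopy by its endpoint edges, and reassemble. The paper's proof is terser and phrases the residual ambiguity as one of interleaving arcs rather than of choosing the face, but it resolves it by exactly the same global observation you invoke---that for a fixed combination there is a unique way to connect the arcs into a simple closed curve.
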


\begin{proof}
Any two consecutive intersections in a combination will be connected by an arc of $\bdy Z_i$ in a face of a chunk. Because each face of a chunk is a disk, such an arc is unique up to isotopy. The only way the boundary component might not be unique is if multiple arcs with intersections on the same edges interleave in a face in different ways. But for a fixed combination, there will be a unique way to connect such arcs into a closed curve.
Therefore, a combination determines a closed curve on the chunk boundary. If we have two identical combinations, they will determine parallel curves on the chunk boundary. These are isotopic on the chunk boundary, as required.
\end{proof}

\begin{lemma}\label{Lem:CombinationsDetermineSurfaceBdry}
Once all combinations that correspond to normal subsurfaces $Z_i$ are fixed, then the slopes of boundary components of $Z$ on $\bdy N(L)$ are uniquely determined.  In particular, this is true even if we do not distinguish between $P$ and $B$. 
\end{lemma}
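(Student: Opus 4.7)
The plan is to leverage \reflem{CombinationsDetermineSubsfce} as a black box and then trace how the boundary curves of the subsurfaces $Z_i$ assemble into $\bdy Z$ on $\bdy N(L)$. By \reflem{CombinationsDetermineSubsfce}, once the combinations are fixed, each boundary component of each $Z_i$ is determined up to isotopy on the chunk boundary. In particular, the sub-arcs of these boundary curves that lie inside truncation faces are determined by their endpoints on truncation edges, which are recorded by the combinations.

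Next I would give a combinatorial description of $\bdy N(L)$ in terms of the chunk decomposition. The truncation faces are the portions of each chunk boundary that come from $\bdy N(L)$, and reassembling the chunks (gluing along the checkerboard surfaces and along $\Pi$) identifies the truncation faces along their truncation edges to recover the link boundary tori $\bdy N(L)$. This gluing is determined purely by the diagram $\pi(L)$ and is independent of the surface $Z$. Then $\bdy Z$ is obtained by concatenating the truncation-face arcs of the $Z_i$'s along matching truncation edges. Since both the arcs (from the combinations) and the gluings (from $\pi(L)$) are determined, $\bdy Z$ is determined as an isotopy class of embedded multicurve in $\bdy N(L)$, and hence the slope on each component of $\bdy N(L)$ is fixed. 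For the final claim, the $P$-versus-$B$ labelling only records whether an arc cuts off a corner of a truncation quad or traverses it otherwise; but the list of truncation edges inside the combination already dictates which of these patterns occurs, so discarding the $P/B$ distinction loses none of the information used above.

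The main technical point I will need to justify carefully is that the arc pieces inside a single truncation face are determined up to isotopy by their endpoints. Since each truncation face is a disk with four edges, and normal form prohibits an arc from returning to the edge where it entered, as well as non-minimal arcs that could be reduced by an isotopy in the face, any two prescribed endpoints on distinct truncation edges are joined by an essentially unique arc in the truncation face. This removes the last potential ambiguity before one performs the global reassembly, and I expect this step (more so than the combinatorial gluing and the $P/B$ remark) to be the delicate one to articulate precisely.
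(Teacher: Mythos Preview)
Your proposal is correct and follows essentially the same approach as the paper: invoke \reflem{CombinationsDetermineSubsfce} to pin down the curves $\bdy Z_i$ on the chunk boundary, observe that the gluing of chunks is dictated by the diagram alone, and conclude that the truncation-face arcs concatenate into a uniquely determined multicurve on $\bdy N(L)$; the $P$/$B$ remark is handled the same way. Your ``main technical point'' about arcs in truncation faces being determined by their endpoints is already subsumed by \reflem{CombinationsDetermineSubsfce} (which determines the entire curve $\bdy Z_i$, including its pieces in truncation faces), so that step is redundant rather than delicate.
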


\begin{proof}
By \reflem{CombinationsDetermineSubsfce}, the boundary components of the subsurfaces $Z_i$ are determined up to isotopy by the fixed combinations. The subsurfaces will be identified across arcs in interior faces to form $Z$. This identification is via the homeomorphism that comes with the chunk decomposition of the link complement. The arcs of $Z_i$ on truncation faces become boundary components of $Z$. These glue uniquely at their endpoints, which lie on the boundary of interior faces, to give closed curves on $\bdy N(L)$.

Observe that combinations only include information on intersections with edges, and are independent of the letters $P$ and $B$, which we assign separately. Thus the slope is also determined independent of the assignment of $P$ or $B$.
\end{proof}

Counting combinations gives us the following lemma, which is similar to part of the argument of Hass, Thompson, and Tsvietkova~\cite{HTT1,HTT2}, despite the fact that here the 3-manifold is more general, and the normal subsurfaces $Z_i$ are general surfaces with boundary, while in \cite{HTT1,HTT2} analogous surfaces are disks. Also, unlike in \cite{HTT2}, a surface $Z$ with non-meridianal boundary is not necessarily a spanning surface for a link, i.e.\ its boundary curve does not have to have intersection number $1$ with a meridian.

\begin{lemma}\label{Lem:BoundaryCount}
Suppose $Z$ has boundary, either meridianal or non-meridianal on $\bdy N(L)$, and $\chi(Z)<0$, and $Z$ is meridianally incompressible.
Let $n$ be the number of crossings of the diagram $\pi(L)$.
Then the number of curves on the boundary of the chunk that could be a boundary component of some subsurface $Z_i$ on $\Pi^+$ is at most $C(n,\chi(Z))$, where:
\[
C(n,\chi(Z))= (6n)^{-20\chi(Z)}
\]
\end{lemma}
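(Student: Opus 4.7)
The plan is to count ``combinations'' on $\Pi^+$ and then apply \reflem{CombinationsDetermineSubsfce} to conclude that each combination determines at most one isotopy class of boundary curve on the chunk boundary. This reduces the lemma to two ingredients: counting the chunk edges on $\Pi^+$, and bounding the length of each combination.

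First, I would count chunk edges on $\Pi^+$. The diagram $\pi(L)$ is a $4$-valent graph with $n$ vertices, and so has $4n/2 = 2n$ edges; each descends to one interior edge on $\Pi^+$. Truncating each ideal vertex produces a quadrilateral whose $4$ truncation edges lie on $\Pi^+$, contributing $4n$ truncation edges. Hence $\Pi^+$ carries exactly $2n + 4n = 6n$ chunk edges, which is the base $6n$ appearing in the claimed bound. Next I would bound the length of a combination for any boundary component of any normal subsurface $Z_i$. By \reflem{WordAndLetterCount}, applied under the standing hypotheses of the present lemma (meridianally incompressible $Z$ with $\chi(Z)<0$ and some boundary on $\bdy N(L)$), every boundary component not associated to a $BBBB$ or $BBSS$ disk has at most $-20\chi(Z)$ intersections with chunk edges. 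For a $BBBB$ or $BBSS$ disk, the boundary meets chunk edges exactly $4$ times, which is also $\leq -20\chi(Z)$ since $\chi(Z)\leq -1$. Thus every combination associated to a boundary component of some $Z_i$ has length at most $k := -20\chi(Z)$.

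Finally, I would count combinations: cyclic ordered sequences of length at most $k$ drawn from a $6n$-element set of chunk edges. A length-$\ell$ ordered sequence has $(6n)^\ell$ possibilities, and the cyclic-rotation identification brings this down to at most $(6n)^\ell/\ell$ combinations of length exactly $\ell$. Summing over $\ell = 1, \dots, k$, the consecutive ratio is $6n \cdot \ell/(\ell+1)$, so the series is dominated by its top term, yielding
\[
\sum_{\ell=1}^{k} \frac{(6n)^\ell}{\ell} \;\leq\; (6n)^{k} \;=\; (6n)^{-20\chi(Z)}.
\]
Combined with \reflem{CombinationsDetermineSubsfce}, this gives the claimed count of boundary curves. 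The only real obstacle is bookkeeping in the last step: one needs the cyclic-rotation savings (division by $\ell$) to absorb the geometric-series tail and land exactly at $(6n)^{-20\chi(Z)}$ without picking up an extra multiplicative constant.
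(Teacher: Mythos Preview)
Your overall strategy matches the paper's: count the $6n$ chunk edges on $\Pi^+$, invoke \reflem{WordAndLetterCount} to cap the number of intersections at $k=-20\chi(Z)$ (handling the $BBBB$/$BBSS$ case separately with $4\leq k$), and then bound the number of combinations of length at most $k$. The edge count and the length bound are fine.

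The gap is in your final counting step. You write that the cyclic-rotation identification ``brings this down to at most $(6n)^\ell/\ell$ combinations of length exactly~$\ell$.'' The inequality goes the other way: each cyclic class contains \emph{at most} $\ell$ linear sequences, so the number of cyclic classes is \emph{at least} $(6n)^\ell/\ell$. (Concretely, with alphabet size $2$ and $\ell=2$ there are three necklaces, not $4/2=2$.) So the division by $\ell$ cannot be used as an upper bound, and without it your geometric sum $\sum_{\ell\leq k}(6n)^\ell$ overshoots $(6n)^k$ by a constant factor.

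The paper closes this gap differently. It simply counts linear sequences of length \emph{exactly} $k$, giving $(6n)^k$, and observes that this already absorbs all shorter combinations: a length-$s$ combination (with a chosen start) is encoded as a length-$k$ sequence by repeating its last edge $k-s$ extra times. Such a padded sequence has two consecutive equal edges and hence cannot arise from an actual normal curve, so it occupies an otherwise unused slot among the $(6n)^k$ sequences. This padding trick is what lets the bound land exactly at $(6n)^{-20\chi(Z)}$ rather than a constant multiple of it.
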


\begin{proof}
Let $\gamma$ be a boundary component of $\bdy Z_i$. By \reflem{CombinationsDetermineSubsfce}, such a curve is determined by its associated combination. 

On $\Pi_j^+$, there are $2n_j$ interior edges of $\Pi_j^+$ for each potential intersection with $\gamma$, where $n_j$ is the number of crossings on $\Pi_j$. Similarly, there are $4n_j$ truncation edges for each potential intersection. Thus for $\bdy Z_i$ on $\Pi_j^+$, there are at most $2n_j+4n_j \leq 6n_j$ choices for an intersection.
Summing over all possibilities for all $j$, there are at most $6n$ choices for an intersection on $\Pi^+ = \bigcup_j \Pi_j^+$.

By \reflem{WordAndLetterCount}, there are at most $-20\chi(Z)$ intersections with chunk edges for $\gamma$, as long as $\gamma$ is not the boundary of a $BBBB$ or $BBSS$ disk. But even if it is the boundary of a $BBBB$ or $BBSS$ disk, there are only $4<-20\chi(Z)$ intersections.
Thus there are at most $(6n)^{-20\chi(Z)}$ combinations of length $-20\chi(Z)$ on $\Pi^+$. However, we wish to count curves with \emph{at most} $-20\chi(Z)$ intersections, meaning we need to count ones with $-20\chi(Z)$ intersections, with $-20\chi(Z)-1$ intersections, etc., down to length $2$. For this, we make use of the observation
that our count of the combinations is already an overcount: For a  combination with $s<-20\chi(Z)$ intersections,
there will be a `last' intersection (depending on our choice of starting intersection). Define a combination of length $-20\chi(Z)$ associated to the shorter combination by requiring the combination to
hit the same last edge consecutively over and over. Such a combination does not correspond to a curve of $\partial Z_i$, but is counted in the bound $(6n)^{-20\chi(Z)}$. Therefore, $(6n)^{-20\chi(Z)}$ is an upper bound for the number of curves on $\Pi^{+}$.
\end{proof}

\begin{remark}
The upper bound in \reflem{BoundaryCount} is not sharp. Indeed, the number of possible combinations includes combinations that do not result in closed curves at all, and ones that result in curves that cannot correspond to boundary components of any $Z_i$ in normal position. However this bound will later allow us to obtain an upper bound on the number of surfaces that is polynomial in $n$. Observe that if $Z$ has many boundary components, or high genus, or even boundary components that intersect a meridian many times, then $\chi(Z)$ will be large as a consequence of the Gauss--Bonnet formula,  equation~\eqref{Eqn:GB}. Thus the upper bound will naturally be higher for such surfaces.
\end{remark}

%%%%%%%%%%%%%%%%%%%%%%%%%%%%%%%%%%%%%%%%%%%%%%%%%%%%%%%%%%%%%%%%%
\section{Surfaces in classical alternating link complements}\label{Sec:Classical}

In prior sections, we worked with a link $L$ projected onto a surface $\Pi$ in a general 3-manifold $Y$ satisfying a few mild hypotheses, and developed machinery for weakly generalized alternating links in $Y$. In this section, we restrict our attention to $Y=S^3$ and classical alternating links on $S^2$.

As noted in \refsec{WGA} above, a classical alternating link on $S^2$ in $S^3$ that is prime is also weakly prime, and has infinite representativity and hat-representativity. It is also checkerboard colored. Hence classical reduced, prime, alternating links are weakly generalized alternating links. Moreover, all the complementary regions in $S^2-\pi(L)$ are disks in this case, and hence the diagram is cellular. Thus it satisfies Assumptions~\ref{ManifoldProj} and~\ref{CellularRep}, hence satisfies all the hypotheses of the lemmas we have encountered so far.

A bound for the number of closed essential surfaces in classical alternating link complements was given in \cite{HTT1}; the same work gives the number of surfaces with meridianal boundary. The number of surfaces with non-meridianal boundary that are Seifert  (spanning, orientable) was given in \cite{HTT2}; this generalizes to nonorientable spanning surfaces. However the bound for the number of non-spanning surfaces with non-meridianal boundary was unknown. These surfaces have some boundary component that follows the knot along its longitude $q$ times, where $q>1$. We give the bound for the number of such surfaces in this section.

\begin{theorem}\label{Thm:MainCountSphere} Let $\pi(L)$ be a prime alternating projection of a link $L$ onto $S^2$ in $Y=S^3$, with $n$ crossings.
Let $Z$ be a fixed connected topological surface with Euler characteristic $\chi$. Then the number of ways, up to isotopy, that $Z$ can be properly embedded in $S^3-N(L)$ as an essential, meridianally incompressible surface is at most
\[
(6n)^{80\chi(Z)^2}.
\]
\end{theorem}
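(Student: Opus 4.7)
The plan is to put $Z$ into least-weight normal form with respect to the chunk decomposition and count the possible configurations of its subsurfaces $Z_i$ within chunks. Since $\pi(L)$ is a prime alternating diagram on $S^2 \subset S^3$, the setup satisfies Assumptions~\ref{ManifoldProj} and~\ref{CellularRep}, and $\hat{r}(\pi(L),\Pi) = r(\pi(L),\Pi) = \infty$, so \refthm{NonBBBBdeterminesZ} applies. Consequently, it suffices to count the possible configurations of those subsurfaces $Z_i$ that are not $BBBB$ or $BBSS$ disks, since these (together with $L$) determine $Z$ up to isotopy.

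The crucial simplification in the classical setting is that each chunk is a topological $3$-ball. By a standard fact in $3$-manifold topology, any connected, properly embedded, incompressible, boundary-incompressible surface in a $3$-ball is a disk. Under Assumption~\ref{MinimalComplexity}, the normal subsurfaces $Z_i$ of a least-weight normal $Z$ inherit these essentiality properties within their chunks, so each $Z_i$ is a normal disk with a single boundary curve. This single-boundary feature is precisely what removes the intermediate factor of $-10\chi(Z)$ (counting boundary components per $Z_i$) that drives the $-800\chi^3$ term in the general bound of \refthm{NotMeridIncompr}.

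With this reduction, I would combine the counting lemmas as follows. By \reflem{BddSurfaceCount}, there are at most $-4\chi(Z)$ relevant subsurfaces $Z_i$. The unique boundary of each such disk meets chunk edges at most $-20\chi(Z)$ times by \reflem{WordAndLetterCount}. By \reflem{BoundaryCount}, there are at most $(6n)^{-20\chi(Z)}$ possible combinations on $\Pi^+$ for any one such boundary, and by \reflem{CombinationsDetermineSubsfce} each combination determines the boundary curve up to isotopy on the chunk boundary. The identification of $\Pi^+$ with $\Pi^-$ across the chunk decomposition is then forced, and by \reflem{CombinationsDetermineSurfaceBdry} the boundary slopes of $Z$ on $\bdy N(L)$ are recovered regardless of the $P$--versus--$B$ labeling. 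Multiplying these counts yields at most
\[
\bigl((6n)^{-20\chi(Z)}\bigr)^{-4\chi(Z)} \;=\; (6n)^{80\chi(Z)^2}
\]
configurations of relevant subsurfaces, and \refthm{NonBBBBdeterminesZ} turns each configuration into at most one isotopy class of essential, meridianally incompressible $Z$.

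The main obstacle is justifying rigorously that in this classical setting every normal $Z_i$ is a disk: this requires checking that the least-weight normal form forces $Z_i$ to be incompressible and boundary-incompressible within its chunk, so that the standard fact about incompressible surfaces in a $3$-ball applies. Everything else is a bookkeeping exercise built directly from the lemmas of Sections~\ref{Sec:Counting} and~\ref{Sec:Intersections}.
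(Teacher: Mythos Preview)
Your proposal is correct and follows essentially the same route as the paper: both reduce to the fact that in the classical setting the chunks are $3$-balls, so every normal subsurface $Z_i$ is a disk, and then both combine \reflem{BddSurfaceCount}, \reflem{BoundaryCount}, and \refthm{NonBBBBdeterminesZ} exactly as you do to obtain $(6n)^{80\chi(Z)^2}$. The only difference is that the paper dispatches your ``main obstacle'' in one line---a non-disk $Z_i$ in a ball would be compressible there, contradicting incompressibility of $Z$---rather than invoking boundary-incompressibility of the $Z_i$, which is not needed.
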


\begin{proof}
For a standard alternating projection of a link onto $S^2$ in $S^3$, there are exactly two chunks in its chunk decomposition, and these are both homeomorphic to balls. Each normal subsurface of $Z$ must be a disk, else it would be compressible within the ball, contradicting the fact that $Z$ is incompressible.

Suppose $Z_i$, $i=1, \dots, k$, are all normal subsurfaces of an essential surface $Z$ in chunks that are not $BBBB$ or $BBSS$ disks. By \reflem{BoundaryCount}, at most $C(n,\chi(Z))=(6n)^{-20\chi(Z)}$ curves on the boundary of the chunk could be a boundary component of $Z_i$. Because each $Z_i$ is a disk, $C(n,\chi(Z))$ therefore gives a bound on the number of possibilities for the disks $Z_i$.
Set aside all $Z_i$ that are $BBBB$ and $BBSS$ disks. The number of options for all remaining $\partial Z_i$ to be placed on the boundaries of the chunks is at most $C(n, \chi(Z))$. By \refthm{NonBBBBdeterminesZ}, these surfaces uniquely determine the surface $Z$. By \reflem{BddSurfaceCount}, there are at most $-4\chi(Z)$ such surfaces.
Therefore, $C(n, \chi(Z))^{-4\chi(Z)} = (6n)^{80\chi(Z)^2}$ is an upper bound for the number of such surfaces.
\end{proof}

\begin{corollary}\label{Cor:NumberOfSurfaces1} With the same hypotheses on $L$ and $\pi(L)$, the bound in \refthm{MainCountSphere} holds if one considers the number of isotopy classes of \emph{all} properly embedded connected essential and meridianally incompressible surfaces of fixed Euler characteristic $\chi$ instead of just one such fixed topological surface.
\end{corollary}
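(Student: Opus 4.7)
My plan is to observe that the proof of \refthm{MainCountSphere} never uses the specific topological type of $Z$, only its Euler characteristic, and therefore the same counting argument already bounds \emph{all} essential meridianally incompressible surfaces of fixed Euler characteristic $\chi$ simultaneously. Concretely, I would re-examine each ingredient entering the proof: \reflem{BddSurfaceCount}, which bounds the number of non-$BBBB$, non-$BBSS$ normal subsurfaces $Z_i$ by $-4\chi(Z)$; \reflem{BoundaryCount}, which bounds the number of candidate boundary curves on $\Pi^+$ by $C(n,\chi(Z)) = (6n)^{-20\chi(Z)}$; and \refthm{NonBBBBdeterminesZ}, which says that these non-trivial subsurfaces, together with $L$, determine $Z$ up to isotopy. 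In each case, the hypothesis is only that $Z$ is a connected essential meridianally incompressible surface with $\chi(Z) = \chi$; no further restriction on orientability, genus, or number of boundary components enters.

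With this in hand, the corollary follows by re-running the count in \refthm{MainCountSphere}: in the classical alternating setting there are two ball-like chunks, so every normal subsurface $Z_i$ is forced to be a disk by incompressibility, and each non-$BBBB$, non-$BBSS$ disk is determined by its boundary combination, of which there are at most $(6n)^{-20\chi}$. Choosing at most $-4\chi$ such combinations gives at most $\bigl((6n)^{-20\chi}\bigr)^{-4\chi} = (6n)^{80\chi^2}$ configurations, and each configuration determines at most one surface $Z$ (of whatever topological type) by \refthm{NonBBBBdeterminesZ}. Hence the total count over all topological types with Euler characteristic $\chi$ is bounded by the same expression.

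The main thing to be careful about is simply being explicit that the normal form procedure, the labeling of boundary components by $S$, $P$, $B$, and the reconstruction of $Z$ from its non-trivial subsurfaces do not require $Z$ to be of a predetermined topological type; they only require $Z$ to be essential, meridianally incompressible, and in least-weight normal meridianal form. Since the theorem's bound already subsumes all topological types with the given $\chi$, no new estimate is needed, and the corollary is essentially a reinterpretation of the theorem's proof.
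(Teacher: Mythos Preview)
Your proposal is correct and takes essentially the same approach as the paper's own proof, which simply observes that the count in \refthm{MainCountSphere} already tallies all possible subsurface configurations depending only on $\chi$, hence bounds the total over all topological types at once. Your version is more explicit about which ingredients depend only on $\chi$, but the idea is identical.
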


\begin{proof}
This follows directly from the proof of \refthm{MainCountSphere}. Note that we count all subsurfaces that might compose any surface with Euler characteristic $\chi$ in the proof. Since all such surfaces are made up of these subsurfaces, we obtain not just the number of embeddings of one topological surface, but rather the sum of the number of embeddings of all topological surfaces of Euler characteristic $\chi$.
\end{proof}

\begin{corollary}\label{Cor:QuasifuchsianSphere}
Let $\pi(L)$ be a prime alternating projection of a link $L$ onto $S^2$ in $Y=S^3$, with $n$ crossings, and suppose that $S^3-L$ is hyperbolic. 
Then for any fixed connected topological surface $Z$ with Euler characteristic $\chi$,  the number of ways, up to isotopy, that $Z$ can be properly embedded in $S^3-N(L)$ as a quasifuchsian surface is at most
$(6n)^{80\chi(Z)^2}$.
\end{corollary}
\begin{proof}
Essential surfaces in hyperbolic 3-manifolds are exactly one of the following three types (see~\cite{Bonahon}): quasifuchsian, a virtual fiber, contain accidental parabolics. Since a meridianal compression gives an accidental parabolic, any quasifuchsian surface satisfies the hypothesis of \refthm{MainCountSphere}. 
\end{proof}

Using similar techniques as in~\cite{HTT1}, we may almost immediately extend this to a count of essential surfaces, with or without boundary, that are not necessarily meridianally incompressible. The closed orientable surfaces counted in~\cite{HTT1} fall into this category, but so do many other surfaces not considered in~\cite{HTT1} or \cite{HTT2}. For example, we also consider non-spanning surfaces with a certain non-trivial slope, surfaces in link complements without a component of $\bdy Z$ on some of link components (such surfaces may meridianally compress), or surfaces with meridianal boundary on some but not all link components.

\begin{theorem}\label{Thm:MainCountSphereMeridCompress}
Let $\pi(L)$ be a prime alternating projection of a link $L$ onto $S^2$ in $Y=S^3$ with $n$ crossings. Let $Z$ be a connected topological surface with Euler characteristic $\chi(Z)$, with all boundary components (if any) on $N(L)$.
The number of ways, up to isotopy, that $Z$ can be properly embedded in $S^3-N(L)$ as an essential surface is at most
\begin{center}
  $(6n)^{80\chi(Z)^2} 2^{-4\chi(Z)+2}$
\end{center}
\end{theorem}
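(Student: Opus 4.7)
The plan is to reduce the count to the meridianally incompressible setting handled by \refthm{MainCountSphere} by first performing a maximal sequence of meridianal compressions on $Z$ and then carefully accounting for the ways to reverse them.

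Given an essential surface $Z$ properly embedded in $S^3-N(L)$, I would let $W$ be the (possibly disconnected) essential surface obtained by a maximal sequence of meridianal compressions. By \reflem{MeridCompressGeneral}, $W$ is meridianally incompressible, $\chi(W)=\chi(Z)$, and $W$ has at most $-4\chi(Z)+2$ more boundary components than $Z$, all of which are meridians on $\bdy N(L)$ and are tubed in pairs to recover $Z$.

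First I would bound the number of isotopy classes of $W$. Each connected component $W_j$ of $W$ is a connected, properly embedded, essential, meridianally incompressible surface of some Euler characteristic $\chi_j\leq 0$, so by \refcor{NumberOfSurfaces1} there are at most $(6n)^{80\chi_j^2}$ isotopy classes for $W_j$. Since $\sum_j \chi_j=\chi(Z)$ with each $\chi_j\leq 0$, the inequality $\sum_j \chi_j^2\leq \chi(Z)^2$ holds, so the total number of isotopy classes of unions $W=\bigsqcup_j W_j$ is bounded by $(6n)^{80\chi(Z)^2}$.

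Next I would count how many distinct $Z$ yield the same $W$. Reconstructing $Z$ from $W$ amounts to pairing the new meridianal boundary components, which lie on the same component of $\bdy N(L)$, and attaching an annular tube to each pair. I would argue that, once $W$ is fixed, each added meridianal boundary component contributes at most a factor of $2$ toward the reconstruction: the two directions in which the tube can wrap around the link component, or equivalently the side of the surface onto which the tube is attached. Since there are at most $-4\chi(Z)+2$ added boundary components, the number of reconstructions is at most $2^{-4\chi(Z)+2}$, giving the claimed bound $(6n)^{80\chi(Z)^2}\cdot 2^{-4\chi(Z)+2}$.

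The hard part will be justifying the factor $2^{-4\chi(Z)+2}$: one must show that, once the cyclic position of the new meridians on each component of $\bdy N(L)$ is recorded as part of $W$, the pairing of meridians by tubes and the choice of tubing annulus together admit a total of at most two options per added boundary component, rather than $(k-1)!!$ as a naive count of pairings would suggest. This parallels the treatment of closed surfaces in \cite{HTT1}, to which the statement explicitly alludes, and is the only point in the argument where new work beyond \refthm{MainCountSphere} and \reflem{MeridCompressGeneral} is required.
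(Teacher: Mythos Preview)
Your overall strategy coincides with the paper's: maximally meridianally compress, invoke the meridianally incompressible bound, then count the ways to undo the compressions. The difference lies precisely in the step you flag as the hard part.

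The paper does not attempt a ``factor of $2$ per added boundary component'' argument. Instead it cites an external result, \cite[Lemma~3.7]{Mossessian}, which shows that a tubed surface is determined up to isotopy by the choice of a $(-2\chi(Z)+1)$-element subset of the (at most) $-4\chi(Z)+2$ new meridianal boundary components. This gives at most $\binom{-4\chi(Z)+2}{-2\chi(Z)+1}\leq 2^{-4\chi(Z)+2}$ tubings, and the theorem follows. So the missing ingredient is exactly this lemma of Mossessian; your heuristic (two directions of wrapping, or two sides of the surface) does not by itself resolve the $(k-1)!!$ pairing issue you correctly raise, and the paper sidesteps rather than confronts that issue.

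One incidental point: you are more careful than the paper about the possibility that the compressed surface $W$ is disconnected, handling it componentwise via \refcor{NumberOfSurfaces1} and the inequality $\sum_j\chi_j^2\leq(\sum_j\chi_j)^2$ for nonpositive $\chi_j$. The paper simply applies \refthm{MainCountSphere} to the compressed surface without comment on connectedness.
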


\begin{proof}
Perform a maximal number of meridianal compressions on $Z$. By \reflem{MeridCompressGeneral} this yields a surface $Z'$ with the same Euler characteristic as $Z$ and with at most $-4\chi(Z)+2$ additional boundary components. The number of such surfaces $Z'$ is governed by \refthm{MainCountSphere}: there are at most $(6n)^{80\chi(Z)^2}$ of these.

To obtain the original $Z$, we need to tube back together all boundary components created by meridianal compressions. Hence we need to multiply the upper bound by the number of ways to do these tubings. Mossessian studied ways to tube together surfaces in~\cite{Mossessian}. Although that paper is concerned with closed Heegaard surfaces, the argument in \cite[Lemma~3.7]{Mossessian} applies in more generality to construct tubed surfaces. It shows that a tubed surface is determined by any $-2\chi(Z)+1$ element subset of the $-4\chi(Z)+2$ boundary components of $Z'$ to be tubed. Thus there are at most ${-4\chi(Z)+2 \choose -2\chi(Z)+1}$ such tubings.
The final bound follows from the fact that ${n \choose k } \leq 2^n$ for $k\leq n$. 
\end{proof}

\begin{corollary}\label{Cor:NumberOfSurfaces2}
With the same hypotheses on $L$ and $\pi(L)$, the bound in \refthm{MainCountSphere} holds if one considers the number of isotopy classes of \emph{all} properly embedded connected essential surfaces of fixed Euler characteristic $\chi$, instead of just one such fixed surface.
\end{corollary}

\begin{proof}
Again this follows directly from the proof of \refthm{MainCountSphereMeridCompress} and \refcor{NumberOfSurfaces1}. 
\end{proof}

%%%%%%%%%%%%%%%%%%%%%%%%%%%%%%%%%%%%%%%%%%%%%%%%%%%%%%%%%%%%%%%%%
\section{The number of surfaces in arbitrary 3-manifolds}\label{Sec:SfcCount}

In \refthm{MainCountSphere}, each subsurface $Z_i$ is a disk in a chunk that is a ball, and therefore $Z_i$ is uniquely determined by its boundary. More generally, we will have subsurfaces $Z_i$ that can have varying genera. To count the total number of embedded non-isotopic surfaces in such cases, we need a bound on the number of incompressible surfaces embedded in the chunk $C$ with fixed boundary curves and with a fixed genus, both orientable and nonorientable. This will be denoted by $X$. Assuming we have such a bound, we obtain a bound on the number surfaces embedded in a weakly generalized alternating link exterior.

\begin{theorem}\label{Thm:MainCount}
Let $L$ be a weakly generalized alternating link in $Y$, with projection $\pi(L)$ with $n$ crossings on $\Pi$, where $Y$, $\Pi$ and $L$ satisfy Assumptions~\ref{ManifoldProj} and~\ref{CellularRep}. 
Suppose that for each 3-manifold component $\Sigma$ of $Y-N(\Pi)$
there is a universal bound $X$ on the number of isotopy classes of incompressible surfaces properly embedded in $\Sigma$ with fixed genus and fixed boundary curves on $\bdy N(\Pi)\cap \bdy\Sigma$.
Fix a topological surface $Z$, with genus $g$, Euler characteristic $\chi$, that is either orientable or non-orientable. Then the number of ways $(Z,\bdy Z)$ can be properly embedded in $(Y-N(L), \bdy N(L))$ as an essential, meridianally incompressible surface, up to isotopy, is at most:
\[ (X(g+1))^{-4\chi} \cdot (6n)^{-800\chi^3+80\chi^2} \]  
\end{theorem}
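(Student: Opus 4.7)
The plan is to adapt the proof of \refthm{MainCountSphere} to the more general setting: instead of counting disks in topological balls, we perform a two-step count for each normal subsurface $Z_i$. First, we bound the possibilities for its boundary curves on the chunk boundary using \reflem{BoundaryCount} and \reflem{WordAndLetterCount}. Then, using the hypothesis on $X$, we bound the possibilities for the topological type and embedding of $Z_i$ in a chunk given the boundary.

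First, place $Z$ in normal form with respect to the chunk decomposition via \cite[Theorem~3.8]{HowiePurcell}, so that $Z = \bigcup_{i=1}^m Z_i$. Because $Z$ is meridianally incompressible by hypothesis, no preliminary meridianal compressions are needed. By \refthm{NonBBBBdeterminesZ}, the surface $Z$ is determined up to isotopy by $L$ together with the subsurfaces $Z_i$ that are neither $BBBB$ nor $BBSS$ disks; by \reflem{BddSurfaceCount}, there are at most $-4\chi$ of these. For each such $Z_i$, \reflem{WordAndLetterCount} bounds the number of its boundary components by $-10\chi$, and \reflem{BoundaryCount} bounds the number of options for each such component by $(6n)^{-20\chi}$. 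Summing the resulting geometric series over the possible number of boundary components of $Z_i$ yields
\[
\sum_{k=0}^{-10\chi} \left((6n)^{-20\chi}\right)^{k} \leq (6n)^{-20\chi(1 - 10\chi)} = (6n)^{200\chi^2 - 20\chi}
\]
boundary configurations for $Z_i$. Given this boundary, $Z_i$ is incompressible in its chunk (inherited from essentiality of $Z$), so by the hypothesis on $X$, there are at most $X$ isotopy classes in the chunk for each fixed genus; since the genus of $Z_i$ lies in $\{0, 1, \ldots, g\}$, there are at most $X(g+1)$ such topological types. Hence each $Z_i$ admits at most $X(g+1) \cdot (6n)^{200\chi^2 - 20\chi}$ possibilities.

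Combining across the (at most) $-4\chi$ subsurfaces gives
\[
\bigl(X(g+1) \cdot (6n)^{200\chi^2 - 20\chi}\bigr)^{-4\chi} = \bigl(X(g+1)\bigr)^{-4\chi} \cdot (6n)^{-800\chi^3 + 80\chi^2},
\]
as claimed. The principal obstacle is ensuring that the $X$-bound applies cleanly: the hypothesis fixes the boundary curves on the $\bdy N(\Pi) \cap \bdy \Sigma$ portion of the chunk boundary, and one must check that this together with the combination data determines the boundary on the truncation faces as well. This follows from the combinatorial structure of the chunk decomposition: each truncation face is a quad, and the arcs of $\bdy Z_i$ passing through it are determined by their endpoints on the chunk edges recorded in the combination. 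A secondary point is to handle orientable versus nonorientable types for $Z_i$ when $Z$ is nonorientable; this contributes at most a constant factor per subsurface, which we absorb into $X(g+1)$ in the final estimate.
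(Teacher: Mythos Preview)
Your proof is correct and follows essentially the same approach as the paper's: put $Z$ in normal form, invoke \refthm{NonBBBBdeterminesZ} and \reflem{BddSurfaceCount} to reduce to at most $-4\chi$ relevant subsurfaces, bound the boundary configurations of each $Z_i$ via the geometric series in $C=(6n)^{-20\chi}$ using \reflem{WordAndLetterCount} and \reflem{BoundaryCount}, multiply by $X(g+1)$ for the genus options, and raise to the power $-4\chi$. The arithmetic and the intermediate bound $C^{-10\chi+1}$ match the paper exactly.

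One small caution about your closing remark: you cannot literally ``absorb'' an extra constant factor per subsurface into $X(g+1)$ and still arrive at the stated bound $(X(g+1))^{-4\chi}$; that would produce $(2X(g+1))^{-4\chi}$ instead. Fortunately the concern is moot: the hypothesis on $X$ bounds \emph{all} incompressible surfaces with fixed genus and fixed boundary, without restricting orientability, so no extra factor is needed. Your observation about truncation faces is well taken and correctly resolved --- the truncation quads lie on $\bdy N(\Pi)$, so the boundary curves of $Z_i$ are entirely on $\bdy N(\Pi)\cap\bdy\Sigma$ and the hypothesis applies directly.
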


\begin{remark}
Note that $g$ in \refthm{MainCount} may refer to orientable or non-orientable genus, which have different relations with Euler characteristic $\chi$ as recalled in the beginning of \refsec{MeridianCompressions}. Once genus, Euler characteristic, and orientability type are fixed, as in \refthm{MainCount}, the expression for Euler characteristic in terms of $g$ is determined, as is the number of boundary components of the surface.
\end{remark}

\begin{remark}\label{Rem:Reformulations}
We do not include a result equivalent to Corollaries~\ref{Cor:NumberOfSurfaces1} and~\ref{Cor:NumberOfSurfaces2} in this section. The proofs of theorems in this section still construct (and therefore count) any surface of given genus, Euler characteristic and orientability type. But due to the classification of surfaces, once the genus, Euler characteristic and orientability type is fixed, the surface is unique topologically. We are only obtaining embeddings of this surface up to isotopy. We could have reformulated the above theorem as giving the number of non-isotopic embedded surfaces of fixed genus, Euler characteristic and orientability type.
\end{remark}

\begin{proof}[Proof of \refthm{MainCount}]
Recall that an embedded, meridianally incompressible essential surface $Z$ consists of subsurfaces $Z_i$ in chunks. %% As in \refthm{MainCountSphere},
Each $Z_i$ that is not a $BBBB$ or $BBSS$ disk may have from $0$ to $-10\chi(Z)$ boundary components by \reflem{WordAndLetterCount}. Each $Z_i$ may also have genus or nonorientable genus from $0$ to $g$. And for a fixed collection of boundary components and fixed $g$, $Z_i$ may be one of at most $X$ subsurfaces by assumption.

Suppose first the boundary of $Z_i$ consists of $w$ curves already placed on the boundary of chunks. We then have at most $X$ options for $Z_i$ of genus $g'$ inside its chunk. Since the genus of $Z_i$ may be from $0$ to $g$, we have
\[ G=\sum_{i=0}^g X = (g+1) X \]
options for a subsurface $Z_i$ with previously fixed boundary.

Now recall the options for $\bdy Z_i$ to be placed on faces of the chunks.
For $Z_i$ with 0 boundary components, we have one option, namely $Z_i=Z$.
For $Z_i$ with one boundary component, we have at most $C=C(n, \chi(Z))$ options by Lemma~\ref{Lem:BoundaryCount}. By \reflem{CombinationsDetermineSubsfce}, an option determines $\bdy Z_i$, and by \reflem{CombinationsDetermineSurfaceBdry}, all options determine $\bdy Z$.
For $Z_i$ with two boundary components, we have at most $C^2$ options by the same lemmas, etc. Hence the total number of options for the boundary of $Z_i$ is at most
\begin{align*}
  G\cdot 1+ & G\cdot C+G\cdot C^2+...+ G\cdot C^{-10\chi(Z)} \\
  & \leq G\cdot C^{-10\chi(Z)+1}\leq (g+1) X \cdot C^{-10\chi(Z)+1}=E
\end{align*}

By \refthm{NonBBBBdeterminesZ}, the surface $Z$ is determined by the subsurfaces $Z_i$ that are not $BBBB$ and $BBSS$ disks.
By \reflem{BddSurfaceCount}, there are at most $-4\chi(Z)$ such subsurfaces $Z_i$. Therefore the number of options for $Z$ is at most $E^{-4\chi(Z)}$.

If we expand out $E$ and $C$, and denote $\chi(Z)$ by  $\chi$, we obtain: 
\[(X(g+1))^{-4\chi} \cdot (6n)^{-800\chi^3+80\chi^2} \qedhere \]
\end{proof}

\begin{corollary}\label{Cor:QuasifuchsianGeneral}
With all the hypotheses of \refthm{MainCount}, suppose in addition that $Y-L$ admits a hyperbolic structure. Then the bound of \refthm{MainCount} bounds the number of ways, up to isotopy, that $Z$ can be properly embedded as a quasifuchsian surface.
\end{corollary}
\begin{proof}
As before, a quasifuchsian surface is meridianally incompressible. 
\end{proof}

\begin{theorem}\label{Thm:NotMeridIncompr} 
With all the hypotheses of \refthm{MainCount}, consider embedded essential surfaces that are no longer required to be meridianally incompressible. Then the number of such surfaces is at most:
\[
(2X(g+1))^{-4\chi+2} \cdot (6n)^{-800\chi^3+80\chi^2} \] 
\end{theorem}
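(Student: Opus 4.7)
The plan is to mimic the argument of \refthm{MainCountSphereMeridCompress}, substituting \refthm{MainCount} for \refthm{MainCountSphere}. Given an essential embedding of a surface $Z$ of genus $g$ and Euler characteristic $\chi$ in $Y-N(L)$, I would first perform a maximal sequence of meridianal compressions to produce a meridianally incompressible essential surface $Z'$. By \reflem{MeridCompressGeneral}, $\chi(Z')=\chi$ and $Z'$ has at most $-4\chi+2$ more boundary components than $Z$. Since each compression is a surgery along a two-sided simple closed curve (one-sided curves are ruled out in the proof of \reflem{MeridCompressGeneral}), the genus $g'$ of $Z'$ satisfies $g'\leq g$, although the orientability type of $Z'$ may differ from that of $Z$.

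I would then bound the number of such $Z'$. Since \refthm{MainCount} counts embeddings of a single fixed topological type, sum over all possible topological types of $Z'$; these are indexed by a genus $g'\in\{0,1,\ldots,g\}$ and an orientability class, giving at most $2(g+1)$ types. For each type, \refthm{MainCount} provides at most $(X(g'+1))^{-4\chi}\cdot(6n)^{-800\chi^3+80\chi^2}\leq (X(g+1))^{-4\chi}\cdot(6n)^{-800\chi^3+80\chi^2}$ embeddings, using $-4\chi\geq 0$. For each such $Z'$, bound the number of ways to recover $Z$ by tubing together pairs of its new boundary components. As in \refthm{MainCountSphereMeridCompress}, Mossessian's tubing argument \cite[Lemma~3.7]{Mossessian} yields at most $\binom{-4\chi+2}{-2\chi+1}\leq 2^{-4\chi+2}$ tubings. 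Multiplying the counts gives
\[ 2(g+1)\cdot (X(g+1))^{-4\chi}\cdot 2^{-4\chi+2}\cdot (6n)^{-800\chi^3+80\chi^2}, \]
and absorbing the prefactor $2(g+1)$ into the two extra factors of $X(g+1)$ coming from raising the exponent from $-4\chi$ to $-4\chi+2$ yields the claimed bound $(2X(g+1))^{-4\chi+2}\cdot (6n)^{-800\chi^3+80\chi^2}$.

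The main obstacle I expect is the change in topological type of the surface under meridianal compression: because $Z$ and $Z'$ need not share a genus or an orientability class, a single embedding of $Z$ does not correspond to an embedding of a single fixed topological surface $Z'$. This forces the sum over topological types, and it is exactly why the exponent on $X(g+1)$ in the final bound is $-4\chi+2$ rather than $-4\chi$: the two extra factors of $X(g+1)$ absorb both the $O(g)$ possible topological types of $Z'$ and combine cleanly with the Mossessian tubing count $2^{-4\chi+2}$ into the single power $(2X(g+1))^{-4\chi+2}$.
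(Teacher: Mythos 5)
Your proposal is correct and follows essentially the same route as the paper: maximal meridianal compressions via \reflem{MeridCompressGeneral}, a sum over the possible genera $g'\leq g$ of the compressed surface $Z'$ bounded by $(g+1)$ times the \refthm{MainCount} count, and Mossessian's tubing bound $\binom{-4\chi+2}{-2\chi+1}\leq 2^{-4\chi+2}$, all absorbed into the exponent $-4\chi+2$. Your extra factor of $2$ for the possible change of orientability class under compression is a refinement the paper does not spell out, and it still absorbs harmlessly into the final expression (when $g=0$ the surface is necessarily orientable, so no factor is needed there).
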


\begin{proof}
For every 
surface $Z$, we first perform the maximal possible number of meridianal compressions. By \reflem{MeridCompressGeneral}, we obtain a new surface $Z'$ from $Z$ with $\chi(Z')=\chi(Z)$, and with at most $-4\chi(Z)+2$ new boundary components. The genus $g'$ of the new surface $Z'$ may have any value from $0, 1, 2, \dots, g$. Applying \refthm{MainCount}, we obtain the upper bound for the number of meridianally incompressible surfaces $Z'$ of genus $g'$: 
$V(g')=(X(g'+1))^{-4\chi} \cdot (6n)^{-800\chi^3+80\chi^2}$.

Since $g'$ may vary, we add options for different genera of meridianally incompressible surfaces, to obtain an upper bound for all of them: 
\[ V(0)+V(1)+V(2)+\dots +V(g)\leq (g+1)V(g). \]

To obtain the original closed surface $Z$ from $Z'$, we need to tube together the punctures introduced by meridian compression. Hence we need to multiply the upper bound for the number of meridianally incompressible surfaces by the number of ways to tube its meridianal punctures. As in the proof of \refthm{MainCountSphereMeridCompress}, by work of Mossessian~\cite[Lemma~3.7]{Mossessian}, the number of tubings producing non-isotopic embedded surfaces is: 
\[ {-4\chi(Z)+2 \choose -2\chi(Z)+1} \]

Hence, using the fact that ${n \choose k}\leq 2^n$, the bound is:
\begin{align*}
 X^{-4\chi} & \cdot (g+1)^{-4\chi+1} \cdot (6n)^{-800\chi^3+80\chi^2}{-4\chi+2 \choose -2\chi+1} \\
&\leq X^{-4\chi}\cdot (g+1)^{-4\chi+1} \cdot (6n)^{-800\chi^3+80\chi^2}\cdot 2^{-4\chi+2} \\
&\leq (2X(g+1))^{-4\chi+2} \cdot (6n)^{-800\chi^3+80\chi^2} \qedhere
\end{align*}
\end{proof}

%%%%%%%%%%%%%%%%%%%%%%%%%%%%%%%%%%%%%%%%%%%%%%%%%%%%%%%%%%%%%%%%%
\section{Bounds for isotopy classes of subsurfaces}\label{Sec:Tori}

To apply \refthm{MainCount}, we need to know bounds on $X$, the number of isotopy classes of incompressible surfaces properly embedded in a chunk with fixed genus and fixed boundary. Recall that chunks are submanifolds of $Y-N(L)$, bounded by connected components of the projection surface $\Pi$. For classical alternating links in $S^3$, there are just two chunks, above and below the projection plane, and topologically each one is a 3-ball. The only surfaces in a 3-ball are disks, each uniquely determined up to isotopy by its boundary, i.e.\ here $X=1$. This leads to the results in \refsec{Classical} for classical alternating links.

Surprisingly, there is not much in the literature giving such bounds in more general cases. 
When the surface is incompressible and also boundary incompressible, one can bound the number of isotopy classes of essential surfaces in compact 3-manifolds with boundary that are irreducible, boundary irreducible, anannular and atoroidal using techniques from classical normal surface theory; see for example Matveev~\cite{Matveev}, Jaco and Oertel~\cite{JacoOertel}, and Hass, Lagarias and Pippenger~\cite{HLP}. However, for our surfaces in a chunk, we cannot assume boundary incompressibility.

When we consider simultaneously embedded surfaces, i.e.\ the number of disjoint non-parallel incompressible surfaces properly embedded in a 3-manifold, results are known even in the case that the surface is not boundary incompressible. B.~Freedman and M.~Freedman gave a bound on the number of simultaneously embedded surfaces with bounded Euler characteristic~\cite{FreedmanFreedman}. However again this is not sufficient for our purposes; we need to count more than just the surfaces that can be simultaneously embedded.
Such a count seems only to be known for a few classes of compact 3-manifolds with boundary besides balls. We treat two cases here.

\subsection{Surfaces in thickened tori and solid tori}
The following is stated by Przytycki~\cite[Theorem~2.3 and Corollary~2.5]{Przytycki}.

\begin{theorem}\label{Thm:ThickenedTorusSfces}
Let $F$ be a properly embedded incompressible surface in $T^2\times I$ that is not a boundary parallel disk. Then $F$ is isotopic to either
\begin{enumerate}
\item an annulus $(\gamma)\times I$, for $\gamma\subset T^2$ a nontrivial simple closed curve, or
\item an annulus or torus parallel to the boundary, or
\item a nonorientable surface that is uniquely determined by two different slopes, one on $T^2\times\{0\}$ and one on $T^2\times\{1\}$.
\end{enumerate}
\end{theorem}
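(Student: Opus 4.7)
The plan is to analyze $F$ according to whether it is closed or has boundary, and in each case to exploit the product and Seifert fibered structure of $T^2\times I$ to reduce to a classification of vertical and horizontal essential surfaces.

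First I would dispatch the closed case. If $F$ is closed, incompressibility forces $\pi_1(F)\hookrightarrow \pi_1(T^2\times I)=\mathbb{Z}^2$; since $\pi_1$ of a Klein bottle is non-abelian, $F$ must be a torus. After isotoping $F$ to be transverse to the $I$-fibers $\{p\}\times I$, the vertical projection $F\to T^2$ becomes a covering with $\chi(F)=\chi(T^2)=0$ and hence a homeomorphism; standard arguments then isotope $F$ onto a level torus $T^2\times\{t\}$, giving case (2).

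For the bounded case, incompressibility makes every boundary component of $F$ essential in its boundary torus, and since disjoint essential simple closed curves on $T^2$ are parallel, the components on $T^2\times\{0\}$ share a common slope $\alpha$ and those on $T^2\times\{1\}$ share a common slope $\beta$ (either possibly empty). I would then pick a Seifert fibration of $T^2\times I$ whose fibers are parallel to a chosen essential simple closed curve on $T^2$ and apply the Waldhausen--Jaco classification of essential two-sided surfaces in Seifert fibered spaces to isotope $F$ to be either vertical or horizontal. Choosing the fibers parallel to $\alpha$ when $\alpha=\beta$ or when only one slope is present, the boundary constraint forces $F$ to be vertical, giving a vertical annulus $\gamma\times I$ (case (1)) or an annulus parallel into a boundary torus (case (2)). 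When $\alpha\neq\beta$, the Seifert fibered classification rules out orientable $F$: vertical surfaces give constant fiber slope, and horizontal surfaces give matching boundary slopes on the two torus boundary components of $T^2\times I$, so neither is compatible with the data $\alpha\neq\beta$. Hence $F$ must be nonorientable. For uniqueness I would pass to the orientable double cover $\widetilde F\to F$, which corresponds to an index-two subgroup of $\pi_1(F)\hookrightarrow\mathbb{Z}^2$ and hence to a double cover of $T^2\times I$ again of the form $T^2\times I$; applying the orientable case to $\widetilde F$ produces an essential annulus, and pushing back down via the covering involution identifies $F$ uniquely from $(\alpha,\beta)$, giving case (3).

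The main obstacle is this last step: identifying the correct double cover (equivalently, pinning down $w_1(F)\in H^1(F;\mathbb{Z}/2)$) and promoting the isotopy uniqueness of $\widetilde F$ to an equivariant uniqueness that descends to $F$. An alternative route avoiding double covers is to build an explicit model nonorientable surface from $(\alpha,\beta)$ as the frontier of a thin regular neighborhood of a one-sided arc system joining representatives of $\alpha$ and $\beta$ through $T^2\times I$, and then, using innermost-disk and outermost-arc arguments against the level torus $T^2\times\{1/2\}$, show every candidate $F$ with these boundary slopes isotopes onto this model.
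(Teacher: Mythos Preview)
Your opening move matches the paper's first paragraph: invoke the vertical/horizontal classification in the Seifert fibred space $T^2\times I$. But that classification (Jaco for two-sided surfaces, Frohman's pseudo-vertical/pseudo-horizontal extension for one-sided ones) requires $F$ to be both incompressible \emph{and boundary incompressible}, and in $T^2\times I$ it yields only cases~(1) and~(2). The nonorientable surfaces of case~(3) are boundary \emph{compressible}: each is a stack of once-holed M\"obius bands, and every such piece carries an evident boundary compression disk. So your appeal to the Seifert classification never sees them. Nothing in your argument produces these surfaces or shows they are the only remaining possibility; you only argue that when $\alpha\neq\beta$ no orientable essential $F$ exists, which is much weaker. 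The paper fills this gap by taking an explicit boundary compression disk $D$ with $\partial D=\alpha\cup\beta$, splitting into the cases where $\partial\alpha$ meets two boundary curves of $F$ (leading to a boundary-parallel annulus) or one (forcing $F\cap(T^2\times[0,\epsilon])$ to be a once-holed M\"obius band whose two boundary slopes have intersection number~$2$), and iterating.

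There is a second, smaller gap: your $\pi_1$-injectivity claims hold only for two-sided surfaces. For a one-sided $F$ in an orientable $3$-manifold it is $\pi_1(\partial N(F))$, not $\pi_1(F)$, that injects; so ``$\pi_1$ of a Klein bottle is non-abelian, hence $F$ is a torus'' does not go through as written, and the phrase ``index-two subgroup of $\pi_1(F)\hookrightarrow\mathbb{Z}^2$'' in your uniqueness sketch has the same problem (indeed $\pi_1$ of a bounded nonorientable surface of genus $\geq 2$ is free of rank $\geq 2$ and cannot embed in $\mathbb{Z}^2$, so these surfaces are never $\pi_1$-injective in the naive sense). The paper's uniqueness argument avoids covers entirely: consecutive slopes in the stack differ by intersection number~$2$, so the sequence of slopes is a path in the subgraph of the Farey complex on vertices with odd denominator; that subgraph is a tree, hence the path from the slope on $T^2\times\{0\}$ to the slope on $T^2\times\{1\}$ is forced.
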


\begin{remark}
It is stated in \cite{Przytycki} that these results follow from work of Bredon and Wood~\cite{BredonWood} and Rubinstein~\cite{Rubinstein:OneSided} on surfaces in lens spaces. While indeed similar proof techniques can be used, Theorem~\ref{Thm:ThickenedTorusSfces} is not stated in that form in those papers. Similarly, more recent unpublished work of Bartolini~\cite{Bartolini:OneSided} gives an alternative proof of uniqueness of the surfaces in \refthm{ThickenedTorusSfces}~(3). However again the fact we need, that all such surfaces in these 3-manifolds have this form, is not stated in that paper. Thus we outline the proof in the Appendix (Section~\ref{Sec:Appendix}) for completeness.
\end{remark}

\begin{corollary}\label{Cor:SolidTorusSfces}
Each incompressible surface that is not parallel to the boundary in a solid torus $S^1\times D^2$ is determined up to isotopy by a simple closed curve on the boundary torus.
\end{corollary}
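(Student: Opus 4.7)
The plan is to derive \refcor{SolidTorusSfces} from \refthm{ThickenedTorusSfces} by viewing the solid torus $V = S^1 \times D^2$ as the union of a thickened torus $M = T^2 \times I$ and a solid torus $N(c)$ glued along $T^2 \times \{0\} = \partial N(c)$, where $c$ is a core curve of $V$.

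First, I would show that any $F$ as in the statement must be either an annulus or a M\"obius band. Since $F$ is incompressible, $\pi_1(F)$ injects into $\pi_1(V) = \mathbb{Z}$, so $\pi_1(F)$ is either trivial or infinite cyclic. The trivial case leaves only a disk (excluded by the paper's convention that disks are neither compressible nor incompressible), a sphere (which would bound a ball in the irreducible $V$), or a projective plane (which does not embed in an orientable 3-manifold). The cyclic case leaves annulus, M\"obius band, torus, or Klein bottle; closed incompressible surfaces in the handlebody $V$ must be boundary parallel and hence are excluded, and Klein bottles do not embed in $V$ since $H_2(V;\mathbb{Z}/2) = 0$ forces every embedded closed surface to be a mod-$2$ boundary. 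Hence $\partial F$ is nonempty, and its components are parallel essential simple closed curves of a common slope $s$ on $\partial V$; this $s$ is the candidate simple closed curve on $\partial V$ promised by the corollary.

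Next, I would isotope $F$ to intersect $\partial N(c)$ transversely in a minimum number of curves, which by incompressibility of $F$ are essential and parallel on $\partial N(c)$, sharing some slope $t$. If this minimum is zero then $F \subset M$. Otherwise each component of $F \cap N(c)$ is an incompressible surface properly embedded in the solid torus $N(c)$, and by the first step applied to $N(c)$ such a component is a disk, annulus, or M\"obius band. A minimality argument, exploiting that any annulus or M\"obius band component in $N(c)$ can be isotoped across the core of $N(c)$ and then out of $N(c)$ entirely, rules out non-disk components; hence each component of $F \cap N(c)$ is a meridian disk of $N(c)$, forcing $t$ to be the meridian slope of $N(c)$.

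Finally, $F' := F \cap M$ is incompressible in $T^2 \times I$ (any compression disk lifts to one for $F$ in $V$), with boundary of slope $s$ on $T^2 \times \{1\} = \partial V$ and meridianal boundary on $T^2 \times \{0\}$. Applying \refthm{ThickenedTorusSfces} to each component of $F'$, I would check that in each of the three possible cases (a vertical annulus $\gamma \times I$, a boundary parallel annulus or torus, or a one-sided surface determined by two slopes) the pair consisting of $s$ together with the forced meridian slope on $\partial N(c)$ uniquely determines $F'$ up to isotopy; regluing the meridian disks of $F \cap N(c)$ then reconstructs $F$ uniquely up to isotopy in $V$. The main technical obstacle is the minimality argument in the middle step: ruling out non-disk components of $F \cap N(c)$ amounts to using the classification recursively on $N(c)$, or equivalently invoking the Bredon--Wood and Rubinstein techniques from the references mentioned above to control surfaces in solid tori directly.
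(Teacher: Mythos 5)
Your overall strategy---split $S^1\times D^2$ into a neighbourhood $N(c)$ of the core and a thickened torus, and feed the outer piece to \refthm{ThickenedTorusSfces}---is the same as the paper's. The genuine gap is in your middle step. You normalize $F$ by minimizing $|F\cap \bdy N(c)|$ and then claim that ``any annulus or M\"obius band component in $N(c)$ can be isotoped across the core of $N(c)$ and then out of $N(c)$ entirely,'' concluding that $F\cap N(c)$ consists of meridian disks. This is false for M\"obius bands: the solid torus is the twisted $I$-bundle over the M\"obius band, and the zero section is a properly embedded, incompressible, non-boundary-parallel M\"obius band with boundary a curve winding twice longitudinally. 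It cannot be isotoped out of $N(c)$, and a configuration in which $F\cap N(c)$ is such a M\"obius band (with $F\cap M$ a vertical annulus) realizes the same count $|F\cap\bdy N(c)|=1$ as the meridian-disk configuration, so minimality does not exclude it. Consequently your conclusion that the inner slope $t$ is forced to be the meridian does not follow. Handling this residual case requires uniqueness of the M\"obius band in $N(c)$ with a given boundary slope, which is essentially an instance of the corollary itself; your remark that one could ``use the classification recursively on $N(c)$'' has no decreasing complexity attached to it, so as written the argument is circular or incomplete.

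The paper avoids this entirely by choosing the other normalization: isotope $F$ to meet the \emph{core curve} transversely in a minimal number of points, so that $F\cap N(c)$ is a union of meridian disks by construction. Then $G=F\cap(S^1\times D^2-N(c))$ has meridianal boundary on $\bdy N(c)$, boundary-parallel annulus components of $G$ along $\bdy N(c)$ are absorbed to reduce the number of core intersections, and \refthm{ThickenedTorusSfces} applies directly: in the nonorientable case $G$ is determined by its two boundary slopes, one of which is now known to be the meridian, so $F$ is determined by the outer slope alone. If you switch to that normalization, the rest of your outline (including your preliminary $\pi_1$ classification of $F$ as an annulus or M\"obius band, which the paper does not need but which is correct) goes through.
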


\begin{proof}
We may isotope the surface $F$ so that it meets the core of the solid torus transversely in a finite number of points; equivalently, $F$ meets a regular neighbhourhood $V$ of the core in a finite number of meridian disks. Then $G=F\cap (S^1\times D^2-V)$ is an incompressible surface, possibly with multiple components, in a thickened torus. By \refthm{ThickenedTorusSfces}, each component of $G$ is a boundary parallel disk or has one of three forms. 

Because $F$ meets $V$ in meridian disks, no component of $G$ will be a boundary parallel disk with boundary on $\bdy V$. If there is a component $G_i$ of $G$ that is an annulus parallel to the boundary $\bdy V$, then isotoping the surface $G_i$ along a boundary compressing disk into $V$ joins two disks of $F\cap V$ with a strip, creating a boundary parallel disk in $V$ that can be isotoped outside of $V$, reducing the number of components of $V\cap F$. Repeating a finite number of times, we may assume that one of three things holds: (a)~$G$ is a vertical annulus with the form of~(1) in \refthm{ThickenedTorusSfces}; (b)~$F\cap V$ is empty, in which case $F$ is boundary parallel in the solid torus (and also in $T^2\times I$); (c)~$G$ has the form of~(3) in \refthm{ThickenedTorusSfces}, hence $F\cap V$ is a single meridian disk.  In case~(a), $F$ is a compressing disk for $S^1\times D^2$. In case~(c), \refthm{ThickenedTorusSfces} implies that $F\cap (S^1\times D^2-V)$ is nonorientable, and 
this surface is uniquely determined by its boundary components on $F\cap \bdy V$ and on $F\cap (S^1\times S^1)$. Since the boundary component on $F\cap \bdy V$ is a meridian, the surface is determined by $F\cap (S^1\times S^1)$.
\end{proof}

The following result is more concrete than Theorems~\ref{Thm:NotMeridIncompr} and~\ref{Thm:MainCount}, since it does not depend on $X$. Remark~\ref{Rem:Reformulations} still applies here, i.e.\ the following corollary gives the bound on the number of embedded non-isotopic surfaces with fixed genus (orientable or non-orientable), Euler characteristic and orientability type. %Topologically they are all embeddings of the same surface.

\begin{corollary}\label{Cor:TorusChunks}
Suppose that $\Pi$ is a torus in the 3-manifold $Y$ that is either:
\begin{enumerate}
\item[(a)] $\Pi$ is the Heegaard torus in a lens space $Y$, or
\item[(b)] $\Pi$ is the torus $T^2\times\{0\}$ in the thickened torus $Y=T^2\times[-1,1]$, or
\item[(c)] $\Pi$ is a boundary parallel torus within a solid torus $Y$.
\end{enumerate}
Let $L$ be a weakly generalized alternating link in $Y$ with projection $\pi(L)$ on $\Pi$, with a cellular diagram and in cases~(a) and~(c), representativity $r(\pi(L),\Pi)>4$. Let $n$ be the number of crossings in the diagram $\pi(L)$. Let $Z$ be a fixed connected topological surface with Euler characteristic $\chi$. Then up to isotopy, the number of ways that $(Z, \bdy Z)$ can be properly embedded in $(Y-N(L), \bdy N(L))$ as an essential, meridianally incompressible surface is at most:
\[  (2(g+1))^{-4\chi} \cdot (6n)^{-800\chi^3+80\chi^2} \]
The number of ways to embed it as an essential surface with no restriction on meridianal compressibility is at most:
\[  (4(g+1))^{-4\chi+2} \cdot (6n)^{-800\chi^3+80\chi^2}
\]
\end{corollary}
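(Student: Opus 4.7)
The plan is to obtain both bounds as direct consequences of \refthm{MainCount} and \refthm{NotMeridIncompr}, after substituting the universal constant $X=2$ in each of the three scenarios. Concretely, I will (i)~identify the chunk components of $Y-N(\Pi)$ and verify that Assumptions~\ref{ManifoldProj} and~\ref{CellularRep} hold, then (ii)~bound by $2$ the number of isotopy classes of incompressible subsurfaces inside each chunk with prescribed boundary and genus, and finally (iii)~substitute into the two master theorems.

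For (i), in case~(a) the two components of $Y-N(\Pi)$ are solid tori, in case~(b) they are two copies of $T^2\times I$, and in case~(c) one solid torus and one thickened torus. All these pieces are irreducible, so $Y-\Pi$ is irreducible. When $Y$ has boundary (cases~(b) and~(c)), $\bdy Y$ sits on a thickened-torus chunk as one of its boundary tori, hence is incompressible there, so Assumption~\ref{ManifoldProj} holds. A solid torus contains no essential torus or Klein bottle, and every embedded torus in $T^2\times I$ is boundary parallel; combined with the cellularity hypothesis, this verifies the first conditions of Assumption~\ref{CellularRep}. The representativity condition $r(\pi(L),\Pi)>4$ is assumed in cases~(a) and~(c); in case~(b) the thickened-torus chunks admit no essential compressing disks at all, so $r(\pi(L),\Pi)=\infty$ by convention.

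For (ii), fix boundary curves on $\bdy\Sigma\cap\bdy N(\Pi)$ and a genus. In a solid-torus chunk, \refcor{SolidTorusSfces} asserts that any incompressible surface that is not boundary parallel is uniquely determined up to isotopy by a simple closed curve on the boundary, so together with the at most one boundary-parallel option this gives at most two isotopy classes. In a thickened-torus chunk, \refthm{ThickenedTorusSfces} classifies incompressible surfaces into three families: vertical annuli $\gamma\times I$, boundary parallel surfaces, and nonorientable surfaces uniquely determined by a pair of slopes on the two boundary tori. Each family contributes at most one option for a prescribed (boundary, genus) pair; moreover the nonorientable family is topologically distinguished from the other two, so at most two of the three families can contribute simultaneously. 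Hence $X\leq 2$ for every chunk.

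Substituting $X=2$ into \refthm{MainCount} yields $(2(g+1))^{-4\chi}\cdot(6n)^{-800\chi^3+80\chi^2}$, matching the first stated bound, and substituting $X=2$ into \refthm{NotMeridIncompr} yields $(4(g+1))^{-4\chi+2}\cdot(6n)^{-800\chi^3+80\chi^2}$, matching the second. The main obstacle is the careful case enumeration in step~(ii): one must verify that among the three families in \refthm{ThickenedTorusSfces}, at most two can simultaneously contribute surfaces sharing the same boundary and genus, since an extra family would raise $X$ to $3$ and enlarge the constants in the final bound.
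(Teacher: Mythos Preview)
Your overall strategy matches the paper's: check Assumptions~\ref{ManifoldProj} and~\ref{CellularRep}, establish $X\le 2$ for every chunk, and substitute into \refthm{MainCount} and \refthm{NotMeridIncompr}. Steps~(i) and~(iii) are fine.

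The gap is in step~(ii). Your claim that ``each family contributes at most one option for a prescribed (boundary, genus) pair'' is false for the boundary-parallel family in \refthm{ThickenedTorusSfces}: if the prescribed boundary is two parallel curves on a single torus $T^2\times\{0\}$, there are \emph{two} boundary-parallel annuli, one for each complementary annulus on that torus. The paper itself notes this explicitly. So your count ``three families, each $\le 1$, nonorientable distinguished, hence $\le 2$'' does not go through; with two options already coming from family~(2), you would need to exclude families~(1) and~(3) outright, which you do not do. (A parallel issue arises in your solid-torus discussion: ``at most one boundary-parallel option'' should be two.)

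What the paper uses instead is the observation you never exploit: the normal subsurfaces $Z_i$ have boundary \emph{only} on the $\bdy N(\Pi)$ side of the chunk, because $\bdy Z\subset\bdy N(L)$ and hence $Z$ is disjoint from $\bdy Y$. In a thickened-torus chunk this rules out families~(1) and~(3) of \refthm{ThickenedTorusSfces}, since both require a boundary component on each of the two tori. Only the boundary-parallel family remains, contributing at most two annuli with given boundary, whence $X\le 2$. Adding this one-sidedness argument (and noting in the solid-torus case that boundary-parallel annuli and the surfaces of \refcor{SolidTorusSfces} never share the same number of boundary components, so for fixed boundary only one case arises) closes the gap.
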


\begin{proof}
Observe that $Y$, $\Pi$, and $L$ above satisfy Assumptions~\ref{ManifoldProj} and~\ref{CellularRep}, so \refthm{MainCount} and \refthm{NotMeridIncompr} apply. 
In all cases, the chunks consist of solid tori or thickened tori. 
When we isotope a properly embedded essential  surface homeomorphic to $Z$ into normal form, it will have boundary only on the torus corresponding to $\bdy N(\Pi)$. Hence \refthm{ThickenedTorusSfces} (in cases (b) and (c)) and \refcor{SolidTorusSfces} (in case (a) and (c)) apply. Therefore genus and boundary curves on $\bdy N(\Pi)$ uniquely determine the surface up to isotopy if $Z$ is nonorientable. Theorem~\ref{Thm:ThickenedTorusSfces} also allows $Z$ to be a boundary parallel annulus, i.e.\ $Z$ is then parallel to the torus $\Pi$. There are at most two different boundary parallel annuli with the same boundary components.  Thus for all these manifolds $X\leq 2$, and we may set $X=2$ in \refthm{MainCount} and \refthm{NotMeridIncompr}.
\end{proof}

\subsection{Orientable surfaces in thickened surfaces}\label{Sec:Virtual}
When we restrict to orientable surfaces, more results are known. For example, the following theorem appears in a paper of Waldhausen from 1968~\cite[Corollary~3.2]{Waldhausen}. 

\begin{theorem}[Waldhausen]\label{Thm:Waldhausen}
Suppose $G$ is a properly embedded orientable incompressible surface in the 3-manifold $F\times I$, for $F$ an orientable surface that is not the 2-sphere, and suppose $\bdy G$ is contained in $F\times\{1\}$. Then every component of $G$ is boundary parallel, parallel to a surface in $F\times\{1\}$.
\qed
\end{theorem}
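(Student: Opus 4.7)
The plan is to argue by induction on the lexicographic complexity $(-\chi(F),|\partial F|)$ of the base surface, using the standard cut-and-paste technique for essential surfaces in product manifolds. The base case is $F$ a disk, so that $F\times I$ is a 3-ball; any properly embedded orientable incompressible surface in a ball is a disjoint union of boundary-parallel disks, and under the hypothesis $\partial G\subset F\times\{1\}$ each such disk is parallel into the top face.

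For the inductive step, I would choose an essential properly embedded arc $\alpha\subset F$ when $\partial F\ne\emptyset$, or an essential simple closed curve $\alpha\subset F$ when $F$ is closed; such $\alpha$ exists since $F\ne S^2,D^2$ in the non-base cases. After isotoping $\partial G$ within $F\times\{1\}$, I can arrange $\alpha\times\{1\}$ to be disjoint from $\partial G$. The vertical surface $A=\alpha\times I$ is then a properly embedded disk or annulus, incompressible in $F\times I$. Isotope $G$ transversely so as to minimize the number of components of $G\cap A$; since $\partial G$ avoids $\partial A\cap (F\times\{1\})$ and $G$ is disjoint from the rest of $\partial(F\times I)$, the intersection consists of simple closed curves only. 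By irreducibility of $F\times I$ (using $F\ne S^2$) together with incompressibility of $G$ and of $A$, an innermost-disk surgery eliminates any circle bounding a disk on either side, contradicting minimality; thus all remaining circles are essential on both sides. When $\alpha$ is an arc, $A$ is a disk with no essential circles, so $G\cap A=\emptyset$; cutting along $A$ yields a product $F'\times I$ of strictly smaller complexity with $\partial G\subset F'\times\{1\}$, and induction applies.

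The main obstacle is the case when $\alpha$ must be a closed curve (forced when $F$ is closed), so $A$ is an annulus and the surviving intersections are essential horizontal circles. I would handle these by using the $I$-factor of $A$ to slide them all up to height $1$, placing them on $\alpha\times\{1\}\subset F\times\{1\}$; this enlarges $\partial G$ while preserving $\partial G\subset F\times\{1\}$, and cutting along $A$ reduces to the previous case. The delicate point is verifying that this upward slide can be realized as an ambient isotopy of $F\times I$ without introducing new self-intersections of $G$; this is done in a bicollar neighborhood of $A$ and relies on essentiality of the intersection circles on $G$. Finally, any closed component of $G$ is an incompressible closed orientable surface in $F\times I$, which by running the same hierarchy argument with $\partial G=\emptyset$ is parallel to a horizontal surface, completing the conclusion that every component of $G$ is parallel to a surface in $F\times\{1\}$.
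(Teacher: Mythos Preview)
The paper gives no proof of this theorem; it cites Waldhausen's 1968 paper and closes the statement with a \qed, so there is no argument in the paper to compare against. Your inductive cut-along-a-vertical-surface strategy is the classical one, but two steps do not work as written.

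First, you cannot in general isotope $\partial G$ off $\alpha\times\{1\}$ for an \emph{arbitrarily chosen} $\alpha$: take $F$ an annulus, $\alpha$ a spanning arc, and $\partial G$ the core circle---these have intersection number one and cannot be separated by isotopy. The same obstruction occurs for closed $F$ whenever $\partial G$ and $\alpha$ have nonzero algebraic intersection. You must instead choose $\alpha$ adapted to $\partial G$ from the outset, and then argue that such a choice exists and still decreases complexity; this already forces you to allow closed $\alpha$ even when $\partial F\neq\emptyset$. Second, and more seriously, your ``slide up'' is incoherent: you describe it as an ambient isotopy of $F\times I$ yet say it ``enlarges $\partial G$'', and an ambient isotopy cannot change the number of boundary components of a properly embedded surface. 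What is actually required is to cut $G$ along the essential circles of $G\cap A$, pass to the cut-open manifold $F'\times I$ (where the pieces now have boundary on the new vertical part $\partial_{\mathrm{new}}F'\times I$), isotope that boundary up to level $1$ \emph{inside} $F'\times I$, check that the pieces remain incompressible there, and finally show that boundary-parallelism of the pieces in $F'\times I$ reassembles to boundary-parallelism of $G$ in $F\times I$. That reassembly step is where the real content of Waldhausen's proof lies, and your outline does not touch it.
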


Theorem~\ref{Thm:Waldhausen} allows us to apply a bound to cellular weakly generalized alternating links with projection surface $\Pi=F\times\{0\}$ in a thickened surface $Y=F\times[-1,1]$. These are \emph{virtual alternating links}. Virtual links were introduced by Kauffman~\cite{Kauffman:VirtualKnots}, and subsequently shown to be equivalent to diagrams of links in thickened surfaces up to moves called stabilisation and destabilisation. Kuperberg proved there exists a unique minimal genus surface $F$ for which the link embeds in $F\times[-1,1]$ with diagram on $F\times\{0\}$ admitting no destabilisations~\cite{Kuperberg:Virtual}. Such a diagram must be cellular. If we require the diagram to be weakly prime and alternating, it is exactly a weakly generalized alternating link, and satisfies all assumptions required for our theorems.

\begin{corollary}\label{Cor:CountVirtualKnots} 
Let $\pi(L)$ be a virtual link $L$ with a weakly prime alternating projection onto a surface $\Pi=F\times\{0\} \subset Y=F\times[-1,1]$ with $n$ crossings. Fix a connected, orientable topological surface $Z$ with Euler characteristic $\chi$. Then up to isotopy, the number of ways to properly embed $(Z, \bdy Z)$ as an essential, meridianally incompressible surface in $(Y-N(L), \bdy N(L))$  is at most:
\[ (2(g+1))^{-4\chi} \cdot (6n)^{-800\chi^3+80\chi^2} 
\]
If in addition $Y-L$ admits a hyperbolic structure, then the above bounds quasifuchsian surfaces.

If we do not require meridianal incompressibility, the number of ways is at most:
\[ (4(g+1))^{-4\chi(Z)+2}(6n)^{-800\chi^3+80\chi^2} \]
\end{corollary}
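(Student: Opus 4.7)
The strategy is to apply \refthm{MainCount} and \refthm{NotMeridIncompr} with the universal chunk-bound $X = 2$, the only new input being a count of incompressible surfaces in a single chunk. I would begin by verifying that $(Y,\Pi,L) = (F\times[-1,1], F\times\{0\}, L)$ satisfies Assumptions~\ref{ManifoldProj} and~\ref{CellularRep}. Since $F$ is a closed orientable surface (and necessarily not $S^2$, because the diagram is cellular and nontrivial), $Y$ is compact, orientable, and irreducible; $\bdy Y = F\times\{\pm 1\}$ is incompressible in $Y-N(\Pi)$; and $Y-\Pi$ is a disjoint union of two open thickened surfaces, hence irreducible. Cellularity is hypothesized, $r(\pi(L),\Pi)>4$ is a standing hypothesis under Assumption~\ref{CellularRep}, and the absence of essential tori and Klein bottles in $Y-N(\Pi)\cong (F\times I)\sqcup(F\times I)$ follows from Waldhausen-type classifications for incompressible surfaces in thickened surfaces.

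Next I would identify the chunks and the position of a normal subsurface. Cutting $Y-N(L)$ along the two checkerboard surfaces produces chunks whose interiors are the two connected components of $Y-N(\Pi)$; each chunk $\Sigma$ is homeomorphic to a thickened surface $F\times I$, with one side carrying the diagram decoration (the interior edges, ideal vertices, and truncation faces corresponding to $\Pi^\pm$ and $N(L)$) and the other side being a component of $\bdy Y$. Because $Z$ is properly embedded in $Y-N(L)$ with $\bdy Z\subset\bdy N(L)\subset N(\Pi)$, every connected normal subsurface $Z_i$ has all of its boundary lying on the decorated $\Pi^\pm$ side. Applying \refthm{Waldhausen}, every orientable incompressible $Z_i$ in $F\times I$ with boundary on a single side is boundary parallel, so $Z_i$ is determined up to isotopy by a connected subsurface of $F$ with the prescribed boundary curves and prescribed genus. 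A connected subsurface of $F$ with fixed boundary curves and fixed genus is (loosely) determined by a choice of ``side'' of its boundary in $F$, giving $X\leq 2$. Substituting $X=2$ into \refthm{MainCount} yields $(2(g+1))^{-4\chi}\cdot(6n)^{-800\chi^3+80\chi^2}$ for the meridianally incompressible count, and substituting $X=2$ into \refthm{NotMeridIncompr} yields $(4(g+1))^{-4\chi+2}\cdot(6n)^{-800\chi^3+80\chi^2}$ for the general count, matching the stated bounds.

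The main obstacle is the bound $X\leq 2$. For a single separating boundary curve there are at most two complementary subsurfaces of $F$, and the genus constraint typically picks out one of them. However, for multiple boundary curves $\gamma_1,\ldots,\gamma_k$ on $F$, the complementary regions can in principle be reassembled into several distinct connected subsurfaces realizing the same genus, so some care is needed to confirm that at most two arise with the prescribed boundary curves and prescribed genus. I would carry out a careful case analysis on how such boundary curves sit on $F$, using cellularity to restrict them. If a sharp argument for the constant $2$ fails in edge cases, one can absorb a larger absolute constant $X$ into the statement without altering the polynomial-in-$n$ and exponential-in-$\chi$ structure of the bound; the resulting expressions would still match (or differ by an absorbed constant from) the stated corollary.
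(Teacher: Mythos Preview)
Your proposal is correct and follows essentially the same approach as the paper: verify the standing assumptions, identify each chunk as a thickened surface $F\times I$, observe that normal subsurfaces have all boundary on the single decorated side, invoke \refthm{Waldhausen} to conclude each orientable $Z_i$ is boundary parallel, and then set $X=2$ in \refthm{MainCount} and \refthm{NotMeridIncompr}. Your caution about the bound $X\leq 2$ when there are multiple boundary curves is well-placed; the paper dispatches this point in a single phrase (``the surface lying on at most two sides of a component of the boundary curve'') without a detailed case analysis, so you are not missing an argument that the paper supplies.
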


\begin{proof}
Note first that $L$ is a weakly generalized alternating link, with infinite representativity because $\Pi$ has no compression discs in $Y$. Note also that $Y$, $\Pi$, and $L$ satisfy Assumptions~\ref{ManifoldProj} and~\ref{CellularRep}. Thus \refthm{MainCount} and \refthm{NotMeridIncompr} will apply, once we determine a value for $X$.
The chunk decomposition consists of two chunks of the form $F\times I$.
Any embedded essential surface in $Y-N(L)$ with boundary on $N(L)$, when isotoped into normal form, will have boundary on only one side of the chunk, namely the boundary component of $F\times I$ that meets the diagram $\pi(L)$. Then \refthm{Waldhausen} implies that such a surface is boundary parallel when orientable, hence uniquely determined by its boundary curves, with the surface lying on at most two sides of a component of the boundary curve. For these manifolds, we may set  $X=2$.
\end{proof}

\begin{question}\label{Qn} 
Consider a 3-manifold $\Sigma$ with a fixed set $S$ of curves on $\bdy\Sigma$. Suppose $Z$ is an incompressible (but not neccessarily boundary incompressible) surface of genus $g$, and $\bdy Z=S$. For which families of 3-manifolds $\Sigma$ does there exists an upper bound on the number of isotopy classes of embeddings of $Z$  that is a constant or depends only on $g$?
\end{question}

%%%%%%%%%%%%%%%%%%%%%%%%%%%%%%%%%%%%%%%%%%%%%%%%%%%%%%%%%%%%%%%%%
\section{Dehn fillings}\label{Sec:DehnFillings}

For manifolds obtained by Dehn filling a link $L$ that satisfies the hypotheses of \refthm{MainCount} and \refthm{NotMeridIncompr}, we also obtain a bound on the number of embedded surfaces. A result concerning fillings of classical alternating links in the 3-sphere, but excluding finitely many filling slopes, is given in \cite{HTT1}. In this section,  both the ambient 3-manifold and the set of slopes are more general.

\begin{theorem}\label{Thm:DehnFilling}
Let $\pi(L)$ be a weakly generalized alternating projection of a knot
$L$ onto a projection surface $\Pi$ in a compact, irreducible, orientable 3-manifold $Y$, satisfying Assumptions~\ref{ManifoldProj} and~\ref{CellularRep}.
Let the number of crossings of $\pi(L)$ be $n$. 
Suppose that there is a universal bound $X$ on the number of isotopy classes of incompressible surfaces properly embedded in $\Sigma$ with fixed genus and a fixed finite set of boundary curves on $\bdy N(\Pi)\cap \bdy\Sigma$.
Finally, suppose $Y-L$ is hyperbolic. Take $(p,q)$ to be a nonmeridianal slope of length $\len(p,q)>2\pi$, and let $Q=4\pi(g-1)/(\len(p,q)-2\pi)$.
Then the number of isotopy classes of closed orientable genus $g$ surfaces in the manifold obtained by $(p,q)$ Dehn filling on $Y-L$ is at most
\begin{align}\label{Eqn:DehnFillingBound}  
& (2X(g+1))^{-4\chi+2} (6n)^{-800\chi^3+80\chi^2} + \\
	& \hspace{1in} \sum_{b=1, ..., Q} (2X(g+1))^{8g+4b-6} (6n)^{-800(2g+b-2)^3+80(2g+b-2)^2} \notag
\end{align}

In particular, the bound is polynomial in $n$.
\end{theorem}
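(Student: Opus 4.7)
The plan is to reduce the count of essential closed genus~$g$ surfaces in the filled manifold $Y(p,q)$ to a count of essential surfaces in the cusped complement $Y - L$, and then apply \refthm{NotMeridIncompr}. Let $V$ denote the solid torus attached by the $(p,q)$ Dehn filling, and let $Z \subset Y(p,q)$ be a closed orientable essential surface of genus~$g$. Since $V$ is a solid torus and $Z$ is incompressible, a standard innermost-circle and innermost-disk argument lets us isotope $Z$ so that $Z \cap V$ is a (possibly empty) disjoint union of meridian disks of $V$: no closed component of $Z \cap V$ can occur because the solid torus contains no closed incompressible surfaces, and an inessential intersection disk can be pushed out of $V$ using incompressibility of $Z$.

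This produces the two cases matching the two summands of \refeqn{DehnFillingBound}. In the first case, $Z \cap V = \emptyset$, so $Z$ is an essential closed surface in $Y - L$ with $\chi(Z) = 2 - 2g$. Applying \refthm{NotMeridIncompr} directly with this $\chi$ and genus $g$ gives the first summand,
\[ (2X(g+1))^{-4\chi + 2}(6n)^{-800\chi^3 + 80\chi^2}. \]
In the second case, $Z \cap V$ consists of $b \geq 1$ meridian disks of $V$. Let $Z' := Z \cap (Y - L)$; then $Z'$ is properly embedded in $Y - L$ with $b$ boundary components, each of slope $(p,q)$ on $\bdy N(L)$, Euler characteristic $\chi(Z') = 2 - 2g - b$, and genus $g$. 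Any compression or boundary compression of $Z'$ in $Y-L$ would extend across meridian disks of $V$ to give a compression of $Z$ in $Y(p,q)$, contradicting essentiality of $Z$; hence $Z'$ is essential in $Y-L$. Capping each boundary curve of $Z'$ with a meridian disk of $V$ recovers $Z$, and this inverse is unique up to isotopy, so the assignment $Z \mapsto Z'$ is injective on isotopy classes, and it suffices to count the possible $Z'$.

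The main obstacle is bounding the number $b$ of boundary components, and this is where the slope length hypothesis enters. The standard Gauss--Bonnet argument of Gromov--Thurston and Agol behind the $2\pi$-theorem equips $Z$ with a piecewise-smooth metric whose total area is $-2\pi\chi(Z) = 4\pi(g-1)$, while each meridian disk of $V$ capping off a component of $\bdy Z'$ contributes at least $\len(p,q) - 2\pi$ to that area. Hence
\[ b\,(\len(p,q) - 2\pi) \leq 4\pi(g-1), \]
which gives $b \leq Q$ exactly as in the statement.

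For each $b \in \{1, 2, \ldots, \lfloor Q \rfloor\}$, applying \refthm{NotMeridIncompr} to $Z'$ with $\chi(Z') = 2 - 2g - b$ (so $-4\chi(Z') + 2 = 8g + 4b - 6$) and genus $g$ bounds the number of such $Z'$ by the $b$th term of the sum in \refeqn{DehnFillingBound}. Adding the closed case and summing over $b$ yields the full bound. Polynomiality in $n$ is immediate because $Q$ depends only on $g$ and $\len(p,q)$ (not on $n$), so the exponent of $6n$ in each term is a polynomial in $g$ and $b$, hence uniformly bounded by a polynomial in $g$ once $b \leq Q$.
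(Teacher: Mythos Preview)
Your proof is correct and follows essentially the same strategy as the paper: reduce to essential surfaces in $Y-L$ (closed, or with $b$ boundary components of slope $(p,q)$), bound $b$ via a hyperbolic area argument, and apply \refthm{NotMeridIncompr} term by term. The only notable difference is in how you justify the bound on $b$: you invoke the Gromov--Thurston/Agol $2\pi$-theorem framework on the closed surface $Z$ in the filled manifold, whereas the paper pleats the punctured surface $Z'$ directly in the hyperbolic manifold $Y-L$, obtaining $\area(Z')=-2\pi\chi(Z')=2\pi(2g+b-2)$ exactly, and then uses that the portion of $Z'$ inside the cusp has area at least $b\cdot\len(p,q)$. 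Both routes yield the same inequality $b(\len(p,q)-2\pi)\le 4\pi(g-1)$; the paper's version is slightly more direct since it avoids putting a metric on the filling.
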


\begin{proof}
Hass, Rubinstein, and Wang showed in \cite[Theorem~4.1]{HassRubinsteinWang} that if $Y-L$ is hyperbolic, then for all but finitely many Dehn filling slopes, the essential surfaces of genus $g$ in the Dehn filling of $Y-L$ along that slope are exactly the essential surfaces in $Y-L$. Thus for these slopes, the upper bound comes from \refthm{NotMeridIncompr}. In general, for all slopes, this is the first term in the sum.

Also in general, for all slopes, if a surface is essential after Dehn filling, then either it was essential before Dehn filling, in which case it will be counted in the first term of the given sum or before Dehn filling it was a genus $g$ essential surface with some number of boundary components, each with slope $(p,q)$ on $Y-L$. Let $b$ be the number of such boundary components. Then the number of essential surfaces with genus $g$ and $b$ boundary components  is counted by \refthm{NotMeridIncompr}. For fixed $b$, there are at most 
\[ (2X(g+1))^{4(2g+b-2)+2}\cdot (6n)^{-800(2g+b-2)^3+80(2g+b-2)^2} \]
such surfaces. These are the terms in the second part of the sum, summed over all values of $b$. It remains to bound $b$.

Let $S$ be an essential surface of genus $g$ embedded in $Y-N(L)$ with $b$ boundary components, each of slope $(p,q)$ on $\bdy N(L)$.
We may give $S$ a pleating (see, for example \cite[Proposition~8.40]{Purcell:HKT}); $S$ then inherits a complete hyperbolic metric from the pleating in $Y-N(L)$. Note that the area of $S$ within the cusp of $Y-L$ is at least equal to $b$ times the length of the slope $(p,q)$ (e.g.~\cite[Lemma~8.44]{Purcell:HKT}). Thus $\area(S) \geq b\len(p,q)$.

By Gauss--Bonnet, we also have $\area(S) = -2\pi\chi(S) = 2\pi(2g+b-2)$.
It follows that
\[ b\len(p,q) -2b\pi \leq 4\pi(g-1), \quad \mbox{or} \quad
b \leq \frac{4\pi(g-1)}{\len(p,q)-2\pi } =Q. \]
Thus summing over all possible integers $b$ in this range gives the result.
\end{proof}

If we adjust our hypotheses on the link and the filling slope slightly, $Y-L$ will automatically be hyperbolic with a bound on the slope length.

\begin{corollary}\label{Cor:DehnFilling}
Let $\pi(L)$ be a weakly generalized alternating projection of a knot
$L$ onto a projection surface $\Pi$, with $n$ crossings, in a 3-manifold $Y$, satisfying Assumptions~\ref{ManifoldProj} and~\ref{CellularRep}. Suppose that $Y-N(\Pi)$ contains no essential annuli with both boundary components on $\bdy Y$. 
Suppose there is a universal bound $X$ on the number of isotopy classes of incompressible surfaces properly embedded in each component of $Y-N(\Pi)$ with fixed genus and fixed boundary.
Let $\sigma=(p, q)$ be a slope on $\bdy N(K)$, with $|q| > 5.627(1-\chi(\Pi)/n)$.
Then the number of isotopy classes of closed orientable genus $g$ surfaces in the manifold obtained by $(p,q)$ Dehn filling of $Y-L$ is bounded above by a polynomial function of $n$, equal to the bound of \eqref{Eqn:DehnFillingBound}. 
\end{corollary}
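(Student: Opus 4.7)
The plan is to reduce the corollary to a direct application of \refthm{DehnFilling}. To do this, I need to verify under the new hypotheses that (i)~$Y-L$ is hyperbolic, and (ii)~the slope condition $|q|>5.627(1-\chi(\Pi)/n)$ forces the Euclidean length $\len(p,q)$ of the filling slope on the maximal horocusp of $Y-L$ to exceed $2\pi$. Once both of these hold, the argument of \refthm{DehnFilling} carries over verbatim and produces exactly the bound in \refeqn{DehnFillingBound}.

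For step (i), I would appeal to the hyperbolicity theorem for weakly generalized alternating links proved by Howie--Purcell~\cite{HowiePurcell}: under Assumptions~\ref{ManifoldProj} and~\ref{CellularRep} together with the representativity hypotheses already built into the definition of a weakly generalized alternating link, the link complement $Y-L$ is hyperbolic provided there are no essential annuli or tori in $Y-N(\Pi)$ obstructing geometrization. The extra hypothesis that $Y-N(\Pi)$ contains no essential annulus with both boundary components on $\bdy Y$ is precisely what is needed to rule out the annular obstruction; the toroidal one is already ruled out by \refasm{CellularRep}. Thus $Y-L$ is hyperbolic.

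For step (ii), I would use a cusp-area estimate for weakly generalized alternating links of the type developed by Lackenby and extended by Howie--Purcell and Purcell and collaborators. Such an estimate provides a universal lower bound on the area of a maximal horocusp of $Y-L$ in terms of the number of crossings $n$ and the Euler characteristic $\chi(\Pi)$ of the projection surface; equivalently, it produces a lower bound on the length of the meridian on the maximal cusp of the form $c \cdot n/(n-\chi(\Pi))$ for an explicit constant $c$. Combining this with the standard inequality $\len(p,q)\geq |q|\cdot\ell(\mu)$ relating the length of a $(p,q)$-slope to $|q|$ and the meridian length $\ell(\mu)$, the condition $|q|>5.627(1-\chi(\Pi)/n)$ rearranges to $\len(p,q)>2\pi$, which is precisely the hypothesis required in \refthm{DehnFilling}. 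The constant $5.627$ is chosen to be the sharpest value compatible with the cusp-area estimate one uses.

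With (i) and (ii) in hand, \refthm{DehnFilling} applies to every slope $(p,q)$ satisfying the hypotheses, and the bound \refeqn{DehnFillingBound} follows at once. The polynomial character in $n$ is immediate from the explicit form of \refeqn{DehnFillingBound}, since $Q$ depends only on $g$ and $\len(p,q)$ and every summand is polynomial in $n$. The main obstacle is the bookkeeping in step (ii): tracking which cusp-area inequality in the literature is sharp enough to yield exactly the constant $5.627$ and checking that all of its hypotheses are implied by Assumptions~\ref{ManifoldProj} and~\ref{CellularRep} plus the no-annulus assumption. Step (i) is essentially a citation, and step (iii) is purely a matter of invoking the already-proved \refthm{DehnFilling}.
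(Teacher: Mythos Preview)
Your proposal is correct and follows essentially the same approach as the paper: reduce to \refthm{DehnFilling} by invoking Howie--Purcell for hyperbolicity and for a slope-length estimate guaranteeing $\len(p,q)>2\pi$. The paper resolves your ``bookkeeping obstacle'' in step~(ii) by citing \cite[Corollary~7.2]{HowiePurcell}, which gives $\len(\sigma) \geq 3.35\,n|q|/\bigl(3(n-\chi(\Pi))\bigr)$, so that $|q| > (6\pi/3.35)(1-\chi(\Pi)/n) \approx 5.627(1-\chi(\Pi)/n)$ yields $\len(\sigma)>2\pi$.
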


\begin{proof}
Under the given assumptions, $Y-L$ is hyperbolic by \cite[Theorem~1.1]{HowiePurcell}. 
It is shown in the proof of \cite[Corollary~7.2]{HowiePurcell} that the length of $\sigma$ is at least
 \[ \len(\sigma) \geq \frac{3.35n|q|}{3(n-\chi(\Pi))}. \]
This is at least $2\pi$ under the given assumption on $q$, so  \refthm{DehnFilling} holds. 
\end{proof}

\section{Appendix: Proof of \refthm{ThickenedTorusSfces}}\label{Sec:Appendix}

\begin{proof}[Proof of \refthm{ThickenedTorusSfces}]

First suppose $F$ is both incompressible and boundary incompressible. If it is orientable, then it must be vertical or horizontal; see for example \cite[Theorem~VI.34]{Jaco:3-mfld}. In $T^2\times I$, a vertical surface is of the form $\{\gamma\}\times I$, and a horizontal surface is a torus parallel to the boundary. A non-orientable surface will be pseudo-horizontal or pseudo-vertical in general, as defined and proved by Frohman~\cite{Frohman}. However, in $T^2\times I$, both of these definitions reduce to the above notion of horizontal and vertical. 
  
Now assume $F$ is boundary compressible. 
If a component of $\bdy F$ bounds a disk $D$ on $T^2\times\{0\}$ or $T^2\times\{1\}$, then because $F$ is incompressible, $\bdy D$ pushed slightly into $F$ bounds a disk in $F$; it follows that $F$ is a boundary parallel disk, contradicting the hypothesis. So we may suppose that each component of $\bdy F$ is essential.

Consider an essential boundary compression disk $D$ for $F$. The boundary $\bdy D$ consists of two arcs $\bdy D = \alpha \cup \beta$, with $\alpha\subset F$ and $\beta \subset T^2\times\{0\}$ or $T^2\times\{1\}$; without loss of generality, say $T^2\times\{0\}$. Suppose first that the endpoints of $\alpha$ lie on distinct components $\gamma_1$ and $\gamma_2$ of $\bdy F$. Then $(T^2\times\{0\}) - (\gamma_1\cup \gamma_2)$ consists of two annuli, one containing $\beta$, with $\beta$ being an essential arc on the annulus.
This is depicted in \reffig{ThickenedTorusSfces1}, on the left.
Then $(T^2\times\{0\}) - N(\beta \cup \gamma_1 \cup \gamma_2)$ has a disk component $E$ in $T^2\times\{0\}$. The disk is depicted in \reffig{ThickenedTorusSfces1}, on the right.
Consider the arcs on $F$ given by $\bdy N(\alpha)\cap F - N(\bdy \alpha)$. Take the union of these with arcs $\gamma_1\cup \gamma_2 -N(\bdy\alpha)$ on $T^2$. The resulting closed curve bounds a disk $E \cup \bdy N(D)$ in $T^2\times I$, also depicted in \reffig{ThickenedTorusSfces1}, on the right.
The disk can be isotoped to be parallel to a disk in $T^2\times\{0\}$. Incompressibility of $F$ implies that it must bound a disk in $F$ as well. It follows that $F$ is a boundary parallel annulus.

\begin{figure}
\centering
  %% Creator: Inkscape 1.2.1 (9c6d41e410, 2022-07-14), www.inkscape.org
%% PDF/EPS/PS + LaTeX output extension by Johan Engelen, 2010
%% Accompanies image file 'ThickenedTorusSfces1.pdf' (pdf, eps, ps)
%%
%% To include the image in your LaTeX document, write
%%   \input{<filename>.pdf_tex}
%%  instead of
%%   \includegraphics{<filename>.pdf}
%% To scale the image, write
%%   \def\svgwidth{<desired width>}
%%   \input{<filename>.pdf_tex}
%%  instead of
%%   \includegraphics[width=<desired width>]{<filename>.pdf}
%%
%% Images with a different path to the parent latex file can
%% be accessed with the `import' package (which may need to be
%% installed) using
%%   \usepackage{import}
%% in the preamble, and then including the image with
%%   \import{<path to file>}{<filename>.pdf_tex}
%% Alternatively, one can specify
%%   \graphicspath{{<path to file>/}}
%% 
%% For more information, please see info/svg-inkscape on CTAN:
%%   http://tug.ctan.org/tex-archive/info/svg-inkscape
%%
\begingroup%
  \makeatletter%
  \providecommand\color[2][]{%
    \errmessage{(Inkscape) Color is used for the text in Inkscape, but the package 'color.sty' is not loaded}%
    \renewcommand\color[2][]{}%
  }%
  \providecommand\transparent[1]{%
    \errmessage{(Inkscape) Transparency is used (non-zero) for the text in Inkscape, but the package 'transparent.sty' is not loaded}%
    \renewcommand\transparent[1]{}%
  }%
  \providecommand\rotatebox[2]{#2}%
  \newcommand*\fsize{\dimexpr\f@size pt\relax}%
  \newcommand*\lineheight[1]{\fontsize{\fsize}{#1\fsize}\selectfont}%
  \ifx\svgwidth\undefined%
    \setlength{\unitlength}{278.59177208bp}%
    \ifx\svgscale\undefined%
      \relax%
    \else%
      \setlength{\unitlength}{\unitlength * \real{\svgscale}}%
    \fi%
  \else%
    \setlength{\unitlength}{\svgwidth}%
  \fi%
  \global\let\svgwidth\undefined%
  \global\let\svgscale\undefined%
  \makeatother%
  \begin{picture}(1,0.25151967)%
    \lineheight{1}%
    \setlength\tabcolsep{0pt}%
    \put(0,0){\includegraphics[width=\unitlength,page=1]{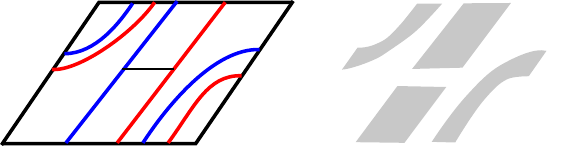}}%
    \put(0.22962752,0.09806901){\color[rgb]{0,0,0}\makebox(0,0)[lt]{\lineheight{1.25}\smash{\begin{tabular}[t]{l}$\beta$\end{tabular}}}}%
    \put(0,0){\includegraphics[width=\unitlength,page=2]{ThickenedTorusSfces1.pdf}}%
    \put(0.29097634,0.17714707){\color[rgb]{0,0.58823529,0.19607843}\makebox(0,0)[lt]{\lineheight{1.25}\smash{\begin{tabular}[t]{l}$\alpha$\end{tabular}}}}%
    \put(0,0){\includegraphics[width=\unitlength,page=3]{ThickenedTorusSfces1.pdf}}%
    \put(0.78929647,0.19964142){\color[rgb]{0,0,0}\makebox(0,0)[lt]{\lineheight{1.25}\smash{\begin{tabular}[t]{l}$E$\end{tabular}}}}%
    \put(0.24520888,0.13983656){\color[rgb]{0,0,0}\makebox(0,0)[lt]{\lineheight{1.25}\smash{\begin{tabular}[t]{l}$D$\end{tabular}}}}%
    \put(0,0){\includegraphics[width=\unitlength,page=4]{ThickenedTorusSfces1.pdf}}%
  \end{picture}%
\endgroup%

\caption{Left: The disk $D$ meeting distinct curves on $T^2\times\{0\}$. Right: The disk $E$ and the disk $D\cup E$.}
\label{Fig:ThickenedTorusSfces1}
\end{figure}

So suppose that the arc $\alpha$ has both endpoints on the same curve of $\bdy F$. The arc $\beta$ cannot cut off a disk $E$ in $T^2\times\{0\}-\bdy F$, else $E\cup D$ would be a compression disk for $F$. Thus $\beta$ is an essential arc in an annulus $T^2\times\{0\}-\bdy F$. It follows that $\bdy F$ has only one component on $T^2\times \{0\}$. Thus an arc $\gamma$ of $\bdy F$ runs from one endpoint of $\bdy \alpha$ to the other.
Because $\gamma$ is an essential curve on the torus, it runs from one side of $D$ to the opposite side, as in \reffig{ThickenedTorusSfces3}, left. It cannot run to the same side of $D$, else $\gamma\cup \beta$ would bound a disk $E$ on $T^2\times \{0\}$, and $E\cup D$ would be a compressing disk for $F$.

\begin{figure}
  %% Creator: Inkscape 1.2.1 (9c6d41e410, 2022-07-14), www.inkscape.org
%% PDF/EPS/PS + LaTeX output extension by Johan Engelen, 2010
%% Accompanies image file 'ThickenedTorusSfces3.pdf' (pdf, eps, ps)
%%
%% To include the image in your LaTeX document, write
%%   \input{<filename>.pdf_tex}
%%  instead of
%%   \includegraphics{<filename>.pdf}
%% To scale the image, write
%%   \def\svgwidth{<desired width>}
%%   \input{<filename>.pdf_tex}
%%  instead of
%%   \includegraphics[width=<desired width>]{<filename>.pdf}
%%
%% Images with a different path to the parent latex file can
%% be accessed with the `import' package (which may need to be
%% installed) using
%%   \usepackage{import}
%% in the preamble, and then including the image with
%%   \import{<path to file>}{<filename>.pdf_tex}
%% Alternatively, one can specify
%%   \graphicspath{{<path to file>/}}
%% 
%% For more information, please see info/svg-inkscape on CTAN:
%%   http://tug.ctan.org/tex-archive/info/svg-inkscape
%%
\begingroup%
  \makeatletter%
  \providecommand\color[2][]{%
    \errmessage{(Inkscape) Color is used for the text in Inkscape, but the package 'color.sty' is not loaded}%
    \renewcommand\color[2][]{}%
  }%
  \providecommand\transparent[1]{%
    \errmessage{(Inkscape) Transparency is used (non-zero) for the text in Inkscape, but the package 'transparent.sty' is not loaded}%
    \renewcommand\transparent[1]{}%
  }%
  \providecommand\rotatebox[2]{#2}%
  \newcommand*\fsize{\dimexpr\f@size pt\relax}%
  \newcommand*\lineheight[1]{\fontsize{\fsize}{#1\fsize}\selectfont}%
  \ifx\svgwidth\undefined%
    \setlength{\unitlength}{278.59177208bp}%
    \ifx\svgscale\undefined%
      \relax%
    \else%
      \setlength{\unitlength}{\unitlength * \real{\svgscale}}%
    \fi%
  \else%
    \setlength{\unitlength}{\svgwidth}%
  \fi%
  \global\let\svgwidth\undefined%
  \global\let\svgscale\undefined%
  \makeatother%
  \begin{picture}(1,0.25151967)%
    \lineheight{1}%
    \setlength\tabcolsep{0pt}%
    \put(0,0){\includegraphics[width=\unitlength,page=1]{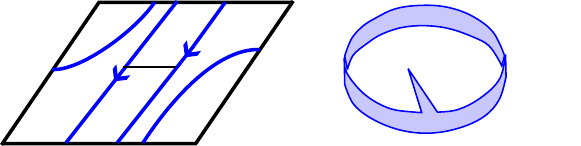}}%
    \put(0.22962752,0.09806901){\color[rgb]{0,0,0}\makebox(0,0)[lt]{\lineheight{1.25}\smash{\begin{tabular}[t]{l}$\beta$\end{tabular}}}}%
    \put(0.24520888,0.14522079){\color[rgb]{0,0,0}\makebox(0,0)[lt]{\lineheight{1.25}\smash{\begin{tabular}[t]{l}$D$\end{tabular}}}}%
    \put(0,0){\includegraphics[width=\unitlength,page=2]{ThickenedTorusSfces3.pdf}}%
    \put(0.84184335,0.02239022){\color[rgb]{0,0,0}\makebox(0,0)[lt]{\lineheight{1.25}\smash{\begin{tabular}[t]{l}$T^2\times\{0\}$\end{tabular}}}}%
    \put(0.84694936,0.2138633){\color[rgb]{0,0,0}\makebox(0,0)[lt]{\lineheight{1.25}\smash{\begin{tabular}[t]{l}$T^2\times\{\epsilon_1\}$\end{tabular}}}}%
    \put(0.70653572,0.14238002){\color[rgb]{0,0,0}\makebox(0,0)[lt]{\lineheight{1.25}\smash{\begin{tabular}[t]{l}$N(\alpha)$\end{tabular}}}}%
    \put(0.88077624,0.1232327){\color[rgb]{0,0,0}\makebox(0,0)[lt]{\lineheight{1.25}\smash{\begin{tabular}[t]{l}$N(\partial F)$\end{tabular}}}}%
    \put(0,0){\includegraphics[width=\unitlength,page=3]{ThickenedTorusSfces3.pdf}}%
  \end{picture}%
\endgroup%

\caption{Left: an arc of $\gamma$ cannot run to the same side of $D$. Right: Thus $F\cap (T^2\times[0,\epsilon_1]$ is the union of an annulus and a M\"obius band, for $\epsilon_1$ small.}
\label{Fig:ThickenedTorusSfces3}
\end{figure}

After an isotopy of $F$ that pushes $\alpha$ close to $T^2\times\{0\}$, for small $\epsilon_1>0$ the surface $F \cap T^2\times[0,\epsilon_1]$ consists of $N(\bdy F) \cup N(\alpha)\subset F$. 
This is the union of an annulus $N(\bdy F)$ with a strip $N(\alpha)$ as in \reffig{ThickenedTorusSfces3}, right. The strip must connect one side of the annulus $N(\bdy F)$ to the other, else $\gamma$ would run to the same side of $D$, contradicting the above paragraph or the fact that $\alpha$ intersects only one component $\gamma$ of $\bdy F$. So $N(\bdy F) \cup N(\alpha)$ is nonorientable, and $F\cap T\times[0,\epsilon_1]$ is a M\"obius band with a hole removed. One boundary component lies on $T^2\times\{0\}$, the other on $T^2\times\{\epsilon_1\}$. Observe that adding $N(\alpha)$ to $N(\bdy F)$ adds $-1$ to the Euler characteristic. Observe also that the boundary component on $T^2\times\{\epsilon_1\}$ is obtained from that on $T^2\times\{0\}$ by  surgering a neighborhood of $\beta$. That is, we remove a neighborhood of $\bdy \beta$ from the slope, and attach $\bdy N(\mbox{int}(\beta))$. The result is that the slopes of the curves on $T^2\times\{0\}$ and $T^2\times\{\epsilon_1\}$ have intersection number exactly two on $T^2$.

Repeat the argument, applied to $F\cap T^2\times[\epsilon_1,1]$. If there is another boundary compressing disk toward $T^2\times\{\epsilon_1\}$, then there will be some $\epsilon_2>\epsilon_1$ so that $T^2\times[\epsilon_1,\epsilon_2] \cap F$ is a M\"obius band with a hole. Thus $F$ is obtained by stacking two of these, yielding a nonorientable surface with genus $2$, and unique boundary components on $T^2\times\{0\}$ and $T^2\times\{\epsilon_2\}$, with the slope on $T^2\times\{\epsilon_2\}$ intersecting that on $T^2\times\{\epsilon_1\}$ exactly twice. This process must terminate with some $\epsilon_n$, because the Euler characteristic of $F$ is finite. This implies that $\epsilon_n=1$, and we have constructed a nonorientable surface with two boundary components on $T^2\times\{0\}$ and $T^2\times\{1\}$, as in case~(3).
%This proves that any incompressible surface in $T^2\times I$ is as claimed.

Finally, consider uniqueness of the nonorientable surfaces with fixed boundary components. At each step of the proof, we built a surface that is a M\"obius band with a hole, with one boundary component on $T^2\times\{\epsilon_i\}$ and one on $T^2\times\{\epsilon_{i+1}\}$, and the boundary curves intersecting exactly twice. Adjust the framing on $T^2\times\{0\}$ so that the original slope $F\cap T^2\times\{0\}$ is $0/1$, and together with a choice of slope $1/0$ this forms a basis for the fundamental group of $T^2$.
The construction above then starts with the slope $0/1$, and adds a band to the surface to obtain a new boundary slope of the form $\pm 2/q$, intersecting $0/1$ exactly twice. At each step, we replace a slope with even numerator by one meeting the first exactly twice, hence the result continues to have even numerator.

Recall that slopes on the torus correspond to elements of $\QQ\cup\{1/0\}$. These can be viewed as vertices of the Farey triangulation of $\HH^2$: identifying $\HH^2$ with the upper half plane, the vertices of the Farey triangulation are points of $\QQ\cup\{\infty\}$ on the real line, and the edges run between reduced pairs $p/q$, $r/s$ if and only if $ps-qr=\pm 1$. Here, $ps-qr$ is the (signed) intersection number of the slopes.

We consider slopes with even numerator and draw an edge between them if they have intersection number $\pm 2$. Equivalently, we may consider slopes with odd denominator and the edges between them with intersection number $\pm 1$. This gives a subset of the usual Farey triangulation. Observe that this subset, consisting only of vertices with odd denominators and the edges between them, forms a connected tree; see for example~\cite[Section~3.1]{Bartolini:OneSided}. Thus for any even slope $2p/q$, by following the unique path in this subset of the Farey tree from $0/1$ to $2p/q$, we may build a nonorientable surface in $T^2\times[0,1]$ with slopes $0/1$ on $T^2\times\{0\}$ and $2p/q$ on $T^2\times\{1\}$, where following an edge corresponds to adding an appropriate band to the surface. Observe that traversing an edge, and then returning along it right away, constructs a surface with a compression disk. Thus the construction proceeds monotonically through the Farey tree, and the surface is unique up to isotopy, by uniqueness of the path through the Farey tree. Therefore the pair of slopes uniquely determines the nonorientable surface.
\end{proof}

\bibliography{biblio}
\bibliographystyle{amsplain}

\end{document}